\def\MR#1{}
\theoremstyle{definition}
\newtheorem{thm}{Theorem}%[section]
\newtheorem{prop}[thm]{Proposition}
\newtheorem{cor}[thm]{Corollary}
\newtheorem{lemma}[thm]{Lemma}
\newtheorem{question}[thm]{Question}
\newtheorem{ex}[thm]{Example}
\newcommand{\Q}{{\mathbb{Q}}}
\newcommand{\PP}{{\mathbb{P}}}
\newcommand{\rvline}{\hspace*{-\arraycolsep}\vline\hspace*{-\arraycolsep}}
\DeclareMathOperator{\Sym}{Sym}
\DeclareMathOperator{\chr}{char}
\DeclareMathOperator{\tr}{tr}
\DeclareMathOperator{\pdim}{pdim}
\DeclareMathOperator{\init}{in}
\DeclareMathOperator{\HF}{HF}
\DeclareMathOperator{\reg}{reg}
\DeclareMathOperator{\height}{ht}
\DeclareMathOperator{\Proj}{Proj}
\title{Ideals of submaximal minors of sparse symmetric matrices}
\author[J.\,Deng]{Jiahe Deng}
\author[A.\,Kretschmer]{Andreas Kretschmer}
\address[A.\,Kretschmer]{Institut f\"ur Algebra und Geometrie\\Fakult\"at f\"ur Mathematik\\Otto-von-Guericke-Universit\"at Magdeburg}
\curraddr{}
\email{andreas.kretschmer@ovgu.de}
\thanks{}
\begin{document}
\maketitle
\setlength{\parindent}{0pt}
\nocite{*}

\begin{abstract}
	We study algebraic and homological properties of the ideal of submaximal minors of a sparse generic symmetric matrix. This ideal is generated by all $(n-1)$-minors of a symmetric $n \times n$ matrix whose entries in the upper triangle are distinct variables or zeros and the zeros are only allowed at off-diagonal places. The surviving off-diagonal entries are encoded as a simple graph $G$ with $n$ vertices. We prove that the minimal free resolution of this ideal is obtained from the case without any zeros via a simple pruning procedure, extending methods of Boocher. This allows us to compute all graded Betti numbers in terms of $n$ and a single invariant of $G$. Moreover, it turns out that these ideals are always radical and have Cohen--Macaulay quotients if and only if $G$ is either connected or has no edges at all. The key input are some new Gröbner basis results with respect to non-diagonal term orders associated to $G$.
\end{abstract}

\section{Introduction and results}

The ideal of maximal minors of a generic $m \times n$ matrix is one of the most studied objects in combinatorial commutative algebra due to its geometric significance and its rich combinatorial structure. If one allows the matrix to be \emph{sparse}, i.e.~to have some entries replaced by zero, some important tools of study disappear, most notably since the action of the general linear group does of course not preserve the zero pattern. Nonetheless, many results have been obtained in the sparse case, for example by Giusti and Merle \cite{GiustiMerle}, Boocher \cite{Boocher2012Paper} and Conca and Welker \cite{Conca2019Lovasz}, and in part even for much weaker assumptions on the entries of the matrix (see for example \cite{Conca2015Universal,Conca2018Cartwright,Conca2020Universal2,Conca2022Radical,Eisenbud1988Linear,Miro-Roig2007Betti,Miro-Roig2008Book}). The symmetric case, i.e.~the ideal of minors of fixed size of a sparse generic \emph{symmetric} matrix, is much less understood. In \cite{Conca2019Lovasz}, Conca and Welker study geometric and arithmetic properties like primality, reducedness, codimension and the complete intersection property of these ideals and of other kinds of sparse determinantal ideals. Homological invariants, on the other hand, such as the graded Betti numbers, projective dimension and regularity are in general still unknown in the sparse symmetric case (see, however, the related \cite[Section~3]{Miro-Roig2007Betti}). Our main result fills this gap in the case of submaximal minors of a sparse generic symmetric matrix, using ideas of Boocher \cite{Boocher2012Paper, Boocher2013Thesis} together with a foundational result by Józefiak \cite{Jozefiak1978Ideals} and combining these with new Gröbner basis results.

More precisely, let $K$ be a field and $R = K[x_{ij}: 1 \leq i \leq j \leq n]$ the polynomial ring, $n \geq 2$. Let $X = (x_{ij})$ be the generic symmetric $n \times n$ matrix, i.e. $x_{ij} \coloneqq x_{ji}$ for $i > j$. Let $G$ be an undirected simple graph with vertex set $[n] \coloneqq \{1,2,\ldots,n\}$ and let $Z$ be the set of all off-diagonal variables corresponding to the non-edges of $G$. We define $X_G$ to be the matrix obtained from $X$ by substituting zeros for all variables in $Z$, and write $I_{n-1}(X)$ and $I_{n-1}(X_G)$ for the ideals of $R$ generated by all $(n-1)$-minors of $X$ and $X_G$, respectively.

If $C_1, \ldots, C_r$ is the partition of $[n]$ where the $C_i$ are the vertex sets of the connected components of $G$, we define
\begin{equation*}
    D_G \coloneqq \sum_{1 \leq s < t \leq r} |C_s| \cdot |C_t|.
\end{equation*}
Then $0 \leq D_G \leq \binom{n}{2}$, and the lower bound is attained if and only if $G$ is connected while the upper bound is attained if and only if $G$ has no edges at all. The following is our main result.

\vbox{
\begin{thm}\label{thm:main}
    The minimal graded free resolution of $R/I_{n-1}(X_G)$ is obtained from the one of $R/I_{n-1}(X)$ by substituting zeros for all variables in $Z$ and ``pruning'' the resulting complex. The graded Betti numbers are
    \begin{align*}
        \beta_{1,n-1}(R/I_{n-1}(X_G)) &= \binom{n+1}{2} - D_G, \\
        \beta_{2,n}(R/I_{n-1}(X_G)) &= n^2 - 1 - 2D_G, \\
        \beta_{3,n+1}(R/I_{n-1}(X_G)) &= \binom{n}{2} - D_G.
    \end{align*}
    All other graded Betti numbers (apart from $\beta_{0,0} = 1$) are zero. Therefore, $I_{n-1}(X_G)$ has a linear resolution with $\reg(I_{n-1}(X_G)) = n-1$. We also deduce $\pdim(R/I_{n-1}(X_G)) = 3$ except if $G$ has no edges at all, in which case $\pdim(R/I_{n-1}(X_G)) = 2$. The quotient ring $R/I_{n-1}(X_G)$ is reduced, and it is Cohen--Macaulay if and only if $G$ is either connected or has no edges at all. Finally,
    \begin{equation*}
        \height(I_{n-1}(X_G)) = \begin{cases} 3 & \text{if } G \text{ is connected}, \\ 2 & \text{otherwise}, \end{cases}
    \end{equation*}
    so that $I_{n-1}(X_G)$ is a perfect ideal if and only if $G$ is either connected or has no edges at all.
\end{thm}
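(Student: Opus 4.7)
The plan is to combine Józefiak's explicit minimal free resolution of $R/I_{n-1}(X)$ with a substitute-and-prune procedure à la Boocher, built on the non-diagonal Gröbner basis results for $I_{n-1}(X_G)$ developed in the preceding sections.

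I would first recall Józefiak's length-three linear resolution of $R/I_{n-1}(X)$; its graded Betti numbers are precisely the formulas in the theorem specialized to $D_G = 0$. Then, extending Boocher's technique to the symmetric setting, I replace every variable in $Z$ by zero in the differentials of Józefiak's complex and cancel all resulting unit entries to obtain a pruned complex $F_\bullet'$. The key point is that $F_\bullet'$ is in fact a minimal graded free resolution of $R/I_{n-1}(X_G)$. To see this I would invoke the new Gröbner basis results: $I_{n-1}(X_G)$ has a Gröbner basis with respect to a non-diagonal term order adapted to $G$, and its initial ideal is obtained from the initial ideal of $I_{n-1}(X)$ by the same substitution. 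This identifies the Hilbert series of $R/I_{n-1}(X_G)$ with that of the pruned complex, which forces $F_\bullet'$ to be exact and minimal.

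The graded Betti number formulas then reduce to a bookkeeping exercise. Each variable $x_{ij} \in Z$ corresponds to an unordered pair of vertices in distinct connected components of $G$, so $|Z| = D_G$. Tracking how each such variable produces a cancellable unit entry in each of the three differentials gives exactly the subtractions $D_G$, $2D_G$, $D_G$ at homological degrees $1$, $2$, $3$. The linear concentration and $\reg(I_{n-1}(X_G)) = n-1$ are read off directly. When $G$ has no edges, $D_G = \binom{n}{2}$, the third Betti number vanishes, and $\pdim$ drops from $3$ to $2$. Reducedness follows from the fact that the initial ideal produced by the non-diagonal Gröbner basis is squarefree.

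It remains to compute the height and deduce the Cohen--Macaulay and perfectness statements. When $G$ is connected, a specialization from the generic case yields $\height I_{n-1}(X_G) = 3$; when $G$ is disconnected, $X_G$ acquires block-diagonal shape after reordering the vertices by components, and singularity of just one block of size $\geq 2$ (or, when $G$ has no edges, the vanishing of any two diagonal entries) is enough to make all $(n-1)$-minors vanish, so $\height = 2$. The Auslander--Buchsbaum formula then gives Cohen--Macaulayness iff $\pdim R/I_{n-1}(X_G) = \height I_{n-1}(X_G)$, which by the above happens exactly when $G$ is connected or has no edges at all; the same equivalence is the definition of perfectness for an ideal in a Cohen--Macaulay ring. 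The main obstacle will be the verification that $F_\bullet'$ is exact after pruning in the symmetric setting, and this is precisely where the new non-diagonal Gröbner basis results are needed; once they are in place the remaining assertions are essentially bookkeeping together with a direct height computation.
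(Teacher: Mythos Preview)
Your high-level plan (J\'ozefiak's resolution, then substitute and prune \`a la Boocher, powered by the Gr\"obner results) matches the paper, but several steps are wrong as stated.

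First, your bookkeeping is based on the identity $|Z|=D_G$, which is false: $Z$ is the set of non-edges of $G$, while $D_G$ counts unordered pairs of vertices lying in \emph{different connected components}. For instance, if $G$ is the path $1\!-\!2\!-\!3$ then $|Z|=1$ but $D_G=0$. Relatedly, the pruning does not ``cancel unit entries.'' All entries of J\'ozefiak's differentials are $\pm$variables, so substituting $Z=0$ produces zeros, never units. What one deletes are the \emph{zero columns} of $[d_1]|_{Z=0}$ (and the rows/columns they force in $[d_2],[d_3]$). By Proposition~\ref{prop:pathDet}, the cofactor in column $(k,l)$ vanishes after the substitution iff $k\neq l$ and there is no path from $k$ to $l$ in $G$; there are exactly $D_G$ such columns, and this---not $|Z|$---is what feeds the formulas $\binom{n+1}{2}-D_G$, $n^2-1-2D_G$, $\binom{n}{2}-D_G$. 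In particular, for a connected but non-complete $G$ no pruning occurs at all, contrary to what your count would predict.

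Second, ``matching Hilbert series forces exactness'' is not a valid argument: a complex with the correct Euler characteristic can still have nonzero homology in two spots that cancel. The paper does something sharper. It homogenizes $L(X)$ with respect to the weight $w_G$, proves (using the square-free initial ideal $I_T$ and the Conca--Varbaro theorem on square-free Gr\"obner degenerations together with \cite[Example~1.2]{Conca2006Nice}) that $L(X)^h|_{t=0}$ is exact, and then reads off from the $w_G$-grading that the pruned complex is exactly the block $D_\bullet|_{Z=0}$ of $L(X)^h|_{t=0}$, hence exact.

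Finally, in the disconnected height argument, singularity of \emph{one} block does not kill all $(n-1)$-minors: the principal cofactors obtained by deleting a row/column from that same block survive. One needs two blocks singular (codimension $2$), or one works, as the paper does, through the minimal primes of the square-free initial ideal $I_T|_{Z=0}$.
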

}

The proof of Theorem~\ref{thm:main} is contained in Sections~\ref{section_groebner} and~\ref{section_minimal_free_res}. The pruning procedure we refered to is the same as the one used by Boocher in \cite{Boocher2012Paper, Boocher2013Thesis}: First, we set to zero all variables in $Z$ in all three matrices of the minimal free resolution of $R/I_{n-1}(X)$. Next, all zero columns of the first matrix of the resolution together with the corresponding rows of the second matrix are erased. Then, all emerging zero columns of the cropped second matrix together with the corresponding rows of the third matrix are erased. Finally, we also delete all emerging zero columns of the cropped third matrix. Theorem~\ref{thm:main} implies that the resulting complex is again exact and that, if $G$ is connected, no pruning occurs at all.

If one is only interested in the case where $G$ is connected, then the arguments of Section~\ref{section_minimal_free_res} can be avoided. This is because it follows from Section~\ref{section_groebner} that $I_{n-1}(X_G)$ has grade $3$ (since grade and height agree for ideals in regular rings). Then J\'ozefiak's result \cite[Theorem~3.1]{Jozefiak1978Ideals} yields that the minimal free resolution of $I_{n-1}(X)$ stays exact after substituting zeros for all variables in~$Z$.

\section{Gröbner bases}\label{section_groebner}

We keep the notation of the introduction. Given a graph $G$ on the vertex set $[n]$, let $w_G$ be the weight vector for the polynomial ring $R = K[x_{ij}: 1 \leq i \leq j \leq n]$ which assigns the following weights to the variables:
\begin{alignat*}{2}
    w_G(x_{ii}) &= 2 \quad &&\text{for all } i = 1, \ldots, n, \\
    w_G(x_{ij}) &= 2 &&\text{for all } ij \in G, \\
    w_G(x_{ij}) &= 1 &&\text{for all } ij \not\in G.
\end{alignat*}
In other words, the variables in $Z$ have $w_G$-weight $1$ while all others have $w_G$-weight $2$. This is analogous to the weights used by Boocher \cite{Boocher2012Paper, Boocher2013Thesis}. Moreover, by $w_{\text{diag}}$ we denote the weight vector which assigns weight $2$ to the diagonal variables and weight $1$ to the off-diagonal variables (which is the special case of $w_G$ where $G$ has no edges). Let $T \subseteq G$ be a spanning forest, i.e., the union of one spanning tree for each connected component of $G$. The main goal of this section is to show that the $\binom{n+1}{2}$ standard $(n-1)$-minors generating $I_{n-1}(X)$ form a Gröbner basis with respect to the weight order given by $<_{T,G} \ \coloneqq \ <_{w_\text{diag}} \circ <_{w_T} \circ <_{w_G}$ and that $\init_{<_{T,G}}(I_{n-1}(X))$ is a square-free monomial ideal. Here, the composition of the weight orders is to be read from right to left, i.e., two monomials are first compared with respect to their $w_G$-degrees, then, in case of equal $w_G$-degrees, with respect to their $w_T$-degrees, and finally, in case of equal $w_T$-degrees, with respect to $w_\text{diag}$. Of course, this is not a monomial order since it does not yield a total ordering of the set of all monomials in $R$. Nonetheless, the ideals of leading terms of $I_{n-1}(X)$ and $I_{n-1}(X_G)$ with respect to $<_{T,G}$ are indeed monomial, as we shall see.

We define $I_T \subseteq R$ to be the square-free monomial ideal generated by the following monomials:
\begin{align*}
\frac{x_{11} \cdots x_{nn}}{x_{ii}} \quad &\text{for all } i = 1, \ldots, n, \\
\frac{x_{11} \cdots x_{nn}}{x_{kk} x_{ll}} x_{kl} \quad &\text{for } 1 \leq k < l \leq n \text{ if in } T \text{ there is no path between } k \text{ and } l, \\
\left(\prod_{i \in [n] \setminus V(p)} x_{ii}\right) \cdot x_p \quad &\text{for all paths } p \text{ in } T.
\end{align*}
We use the word path in its strictest sense, i.e., no vertex appears twice in a path and it contains at least one edge. By $V(p)$ we denote the vertex set of the path $p$ and by $x_p \coloneqq \prod_{ij \in p} x_{ij}$ the product of all off-diagonal variables corresponding to the edges in $p$. It is easily seen that all generators are square-free monomials of degree $n-1$, none is redundant in $I_T$ and their total number is $\binom{n+1}{2}$. The latter follows from realizing that in a tree every path is uniquely determined by its two end points. We have $I_T \subseteq \init_{<_{T,G}}(I_{n-1}(X))$ since the generators of $I_T$ are precisely the initial terms of the standard $(n-1)$-minors with respect to $<_{T,G}$. This can easily be read off the following formula.

\begin{prop}[\cite{jones2005covariance,DoubleMarkovian}]\label{prop:pathDet}
For all $1 \leq k < l \leq n$ we have
  \begin{equation*}
    (-1)^{k+l}\det \left( (X_G)_{[n] \setminus k, [n] \setminus l} \right) = \sum_{\substack{p \text{ path in } G \\ \text{ between } k \text{ and } l}} (-1)^{|V(p)|-1} \cdot \det \left((X_G)_{[n] \setminus V(p), [n] \setminus V(p)} \right) \cdot x_p.
  \end{equation*}
\end{prop}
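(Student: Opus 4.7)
The plan is to establish the identity by expanding the left-hand side via the Leibniz formula and repackaging the permutations that arise. For any bijection $\sigma \colon [n]\setminus k \to [n]\setminus l$, the standard cofactor sign identity gives $\sgn(\pi) = (-1)^{k+l}\sgn(\sigma)$, where $\pi \in S_n$ denotes the extension with $\pi(k) = l$. Hence
\[
(-1)^{k+l}\det\bigl((X_G)_{[n]\setminus k,\,[n]\setminus l}\bigr) \;=\; \sum_{\substack{\pi \in S_n \\ \pi(k) = l}} \sgn(\pi)\prod_{i \in [n]\setminus k} x_{i,\pi(i)}.
\]

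Next I would partition this sum according to the cycle $C$ of $\pi$ containing $k$. Since $\pi(k) = l \neq k$, this cycle has the form $C = (k, l, c_2, \ldots, c_{s-1})$ of length $s = |V(C)| \geq 2$ and sign $(-1)^{s-1}$. The remaining cycles of $\pi$ assemble into a permutation $\rho$ of $[n]\setminus V(C)$, and the pair $(C,\rho)$ recovers $\pi$ uniquely. Using the symmetry $x_{ij} = x_{ji}$ of $X_G$, the contribution of $C$ to the monomial is
\[
\prod_{i \in V(C)\setminus k} x_{i,\pi(i)} \;=\; x_{l,c_2}\,x_{c_2,c_3}\cdots x_{c_{s-1},k} \;=\; x_p,
\]
where $p$ is the path $l - c_2 - \cdots - c_{s-1} - k$ between $k$ and $l$. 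This monomial is nonzero precisely when every edge of $p$ lies in $G$, so only genuine paths of $G$ contribute. Conversely, any path $p$ in $G$ between $k$ and $l$ recovers $C$ uniquely: for $s=2$ the cycle must be $(k,l)$, while for $s\geq 3$ the constraint $\pi(k) = l$ rules out the reverse ordering of the interior vertices.

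Assembling the pieces, the inner sum over $\rho \in \Sym([n]\setminus V(p))$ becomes the Leibniz expansion of $\det\bigl((X_G)_{[n]\setminus V(p),\,[n]\setminus V(p)}\bigr)$, and I obtain
\[
\sum_{\substack{p \text{ path in } G \\ \text{between } k \text{ and } l}} (-1)^{|V(p)|-1}\,\det\bigl((X_G)_{[n]\setminus V(p),\,[n]\setminus V(p)}\bigr)\,x_p,
\]
which is the right-hand side of the claim. The main bookkeeping step is the clean bijection between directed cycles of $S_n$ through $\{k,l\}$ with $\pi(k)=l$ and undirected paths of $G$ from $k$ to $l$; beyond that the argument is a straightforward repackaging of the Leibniz formula, and I do not foresee a real obstacle.
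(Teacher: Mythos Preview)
Your argument is correct: the Leibniz expansion combined with the cycle decomposition of each $\pi\in S_n$ with $\pi(k)=l$ gives exactly the claimed path sum, and your bijection between such cycles and undirected paths between $k$ and $l$ is sound (the constraint $\pi(k)=l$ indeed pins down the orientation of the cycle once the underlying path is fixed).

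Note, however, that the paper does not actually prove this proposition at all: its ``proof'' consists solely of citing \cite[Theorem~1]{jones2005covariance} and \cite[Proposition~3.19]{DoubleMarkovian}. So there is no approach in the paper to compare against. What you have supplied is a short self-contained argument that makes the paper independent of those references for this step; the proofs in the cited sources use essentially the same cycle-decomposition idea, so your route is the standard one rather than a genuinely different method. One small stylistic point: in your final summarizing sentence the bijection is really between directed cycles through $k,l$ with $\pi(k)=l$ and \emph{all} vertex-sequences from $k$ to $l$ (paths in $K_n$); the restriction to paths in $G$ enters only because the non-$G$ edges contribute zero, as you correctly explain earlier.
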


\begin{proof}
    This is the content of both~\cite[Theorem~1]{jones2005covariance} and~\cite[Proposition~3.19]{DoubleMarkovian}.
\end{proof}

In the special case where $G = T$ has no edges at all, we simply write $I$ for $I_T$,
\begin{equation*}
I = \left(\frac{x_{11} \cdots x_{nn}}{x_{ii}}, \frac{x_{11} \cdots x_{nn}}{x_{kk} x_{ll}} x_{kl} \ | \ i = 1, \ldots, n, \ 1 \leq k < l \leq n \right) \subseteq R.
\end{equation*}
In this case, $<_{T,G} \ = \ <_{w_\text{diag}}$.

\begin{lemma}
	We have $I = \init_{w_\text{diag}}(I_{n-1}(X))$, in particular $\HF(I) = \HF(I_{n-1}(X))$ and the standard $(n-1)$-minors of $X$ form a Gröbner basis of $I_{n-1}(X)$ with respect to $<_{w_\text{diag}}$.
\end{lemma}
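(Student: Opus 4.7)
The plan is to prove the main claim $I = \init_{w_\text{diag}}(I_{n-1}(X))$; the Hilbert function equality and the Gröbner basis property follow immediately.

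For the inclusion $I \subseteq \init_{w_\text{diag}}(I_{n-1}(X))$, I would identify the $w_\text{diag}$-leading form of each standard $(n-1)$-minor as a single monomial equal, up to sign, to a generator of $I$. For the diagonal minor $\det(X_{[n]\setminus i,[n]\setminus i})$, the identity-permutation contribution $\prod_{j\neq i} x_{jj}$ has weight $2(n-1)$, while every other permutation of $[n]\setminus i$ involves at least two off-diagonal factors (since the relevant submatrix is symmetric and non-identity terms come from cycles of length $\geq 2$), hence weight at most $2n-4$. For an off-diagonal minor with $k<l$, I would apply Proposition~\ref{prop:pathDet} with $G=K_n$: the summand indexed by a path $p$ has $w_\text{diag}$-leading monomial of weight $2(n-|V(p)|) + (|V(p)|-1) = 2n - |V(p)| - 1$ (using that the leading monomial of the smaller diagonal determinant is again its diagonal product), which is uniquely maximized by the single-edge path from $k$ to $l$, giving the monomial $-\frac{x_{11}\cdots x_{nn}}{x_{kk}x_{ll}}\,x_{kl}$.

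For the reverse inclusion I would use a Hilbert-series comparison. Since $<_{w_\text{diag}}$ comes from a weight vector, the standard flat-family argument gives $\HF(R/\init_{w_\text{diag}}(I_{n-1}(X))) = \HF(R/I_{n-1}(X))$, and the right-hand side is determined by Józefiak's minimal free resolution of $R/I_{n-1}(X)$ of length $3$ with Betti numbers $\binom{n+1}{2}$, $n^2-1$, $\binom{n}{2}$ in degrees $n-1, n, n+1$. It then remains to compute $\HF(R/I)$ and match. Since $I$ is a squarefree monomial ideal with $\binom{n+1}{2}$ generators in degree $n-1$, one can either analyze the associated Stanley--Reisner complex or, more directly, exhibit an explicit length-$3$ free resolution of $R/I$ of the same shape as Józefiak's. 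Combined with the containment already established, equality of Hilbert series forces $I = \init_{w_\text{diag}}(I_{n-1}(X))$.

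The main obstacle is the Hilbert-series matching: computing $\HF(R/I)$ directly from its generators is combinatorially delicate. A fully elementary alternative, which avoids invoking Józefiak, is to verify Buchberger's weight-order criterion directly, by expressing each S-pair of standard $(n-1)$-minors as a combination of the minors with $w_\text{diag}$-weights strictly smaller than that of the lcm of leading monomials; the required identities come from Proposition~\ref{prop:pathDet} together with Laplace expansions of $\det X$ along rows and columns (so that e.g. $x_{ii}\,f_i - x_{jj}\,f_j$ is rewritten as a sum of off-diagonal minors times off-diagonal variables), following the same bookkeeping strategy as Boocher in the rectangular case~\cite{Boocher2012Paper}.
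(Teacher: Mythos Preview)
Your outline is the same as the paper's: establish $I\subseteq\init_{w_\text{diag}}(I_{n-1}(X))$ by identifying the leading monomial of each standard minor, then force equality by showing $\HF(I)=\HF(I_{n-1}(X))$, computing the right-hand side from J\'ozefiak's resolution. Your argument for the inclusion is correct and more detailed than the paper's one-line remark.

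The gap is exactly the step you label ``the main obstacle'': you do not compute $\HF(R/I)$, and none of the three alternatives you sketch (Stanley--Reisner face counting, an explicit length-$3$ resolution of $R/I$, or Buchberger S-pair reduction) is carried out. The paper fills this in by an entirely elementary device that is simpler than any of your suggestions: it peels the generators of $I$ off one at a time via short exact sequences of the form
\[
0 \to (R/(I:m))(-(n-1)) \xrightarrow{\,\cdot m\,} R/(I\!-\!\{m\}) \to R/I \to 0,
\]
first removing all $\binom{n}{2}$ off-diagonal generators (for which the colon is $(x_{kk},x_{ll})$), then the $n$ diagonal generators (for which the colon is $(x_{ii})$ or $0$). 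Each quotient on the left is a polynomial ring in a known number of variables, so the Hilbert functions telescope to a closed formula that matches J\'ozefiak's. This bypasses any need for a resolution of $R/I$ or S-pair combinatorics.
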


\begin{proof}
	Since the generators of $I$ are the initial terms of the standard minors generating $I_{n-1}(X)$ with respect to $<_{w_\text{diag}}$, we have $I \subseteq \init(I_{n-1}(X))$. Hence, it is enough to prove $\HF(I) = \HF(I_{n-1}(X))$. The Hilbert function of $I_{n-1}(X)$ may be deduced from the minimal graded free resolution
    \begin{equation*}
        0 \rightarrow R(-(n+1))^{\binom{n}{2}} \rightarrow R(-n)^{n^2 - 1} \rightarrow R(-(n-1))^{\binom{n+1}{2}} \rightarrow I_{n-1}(X) \rightarrow 0,
    \end{equation*}
    provided by J\'ozefiak \cite[Theorem~3.1]{Jozefiak1978Ideals}, see Section~\ref{section_minimal_free_res} for more details. One obtains
    \begin{align*}
        \HF(I_{n-1}(X))(d) &= \binom{n+1}{2} \HF(R)(d-n+1) - (n^2 - 1) \HF(R)(d-n) + \binom{n}{2} \HF(R)(d-n-1) \\
        &= \binom{n+1}{2} \binom{\binom{n}{2} + d}{\binom{n+1}{2} - 1} - (n^2 - 1) \binom{\binom{n}{2}-1+d}{\binom{n+1}{2} - 1} + \binom{n}{2} \binom{\binom{n}{2}-2+d}{\binom{n+1}{2} - 1} \\
        &= \binom{\binom{n}{2} + d}{\binom{n+1}{2} - 1} + (n-1) \binom{\binom{n}{2} - 1 + d}{\binom{n+1}{2} - 2} + \binom{n}{2} \binom{\binom{n}{2} - 2 + d}{\binom{n+1}{2} - 3},
    \end{align*}
	where in the last step we used the binomial identity $\binom{m}{k} = \binom{m-1}{k} + \binom{m-1}{k-1}$ several times. On the other hand, we compute $\HF(R/I)$ via the technique described in \cite[Section~15.1.1]{Eisenbud2013Commutative}. We first choose some off-diagonal generator of $I$, say $\frac{x_{11} \cdots x_{nn}}{x_{kk} x_{ll}} x_{kl}$ with $k < l$. Consider the associated short exact sequence
    \begin{equation*}
        0 \rightarrow (R/(x_{kk}, x_{ll}))(-(n-1)) \rightarrow R/I_1 \rightarrow R/I \rightarrow 0,
    \end{equation*}
    where the first map is multiplication by the chosen generator and $I_1$ has the same generators as $I$ except for the latter. We obtain $\HF(R/I)(d) = \HF(R/I_1)(d) - \HF(R/(x_{kk},x_{ll}))(d-n+1)$. Since $R/(x_{kk},x_{ll})$ is a polynomial ring in $\binom{n+1}{2}-2$ variables, its Hilbert function is $p(d) \coloneqq \binom{\binom{n+1}{2}-3+d}{\binom{n+1}{2}-3}$, and we can proceed with $R/I_1$. In the same way, we pick any remaining off-diagonal generator of $I_1$, and from the analogous short exact sequence we get $\HF(R/I_1)(d) = \HF(R/I_2)(d) - p(d-n+1)$, i.e., $\HF(R/I)(d) = \HF(R/I_2)(d) - 2p(d-n+1)$ etc. We continue in this manner until all off-diagonal generators are gone. We then have $\HF(R/I)(d) = \HF(R/J)(d) - \binom{n}{2} p(d-n+1)$, where $J = \left( \frac{x_{11} \cdots x_{nn}}{x_{ii}} \ | \ i = 1, \ldots, n \right) \subseteq R$. Picking the first generator of $J$, i.e. $x_{22} \cdots x_{nn}$, we get the short exact sequence
    \begin{equation*}
        0 \rightarrow (R/(x_{11}))(-(n-1)) \rightarrow R/J_1 \rightarrow R/J \rightarrow 0,
    \end{equation*}
    where $J_1$ has the same generators as $J$ except for the first one. This shows $\HF(R/J)(d) = \HF(R/J_1)(d) - \HF(R/(x_{11}))(d-n+1)$. The Hilbert function of the polynomial ring $R/(x_{11})$ is again known, so we may proceed with $R/J_1$. The analogous short exact sequence occurs $n-1$ times until we reach $J_{n-1}$ which is principal. The last short exact sequence then has the form
    \begin{equation*}
        0 \rightarrow R(-(n-1)) \rightarrow R \rightarrow R/J_{n-1} \rightarrow 0.
    \end{equation*}
    We obtain
    \begin{equation*}
        \HF(R/J)(d) = \binom{\binom{n+1}{2}-1+d}{\binom{n+1}{2}-1} - (n-1) \binom{\binom{n}{2}-1+d}{\binom{n+1}{2}-2} - \binom{\binom{n}{2}+d}{\binom{n+1}{2}-1}.
    \end{equation*}
    Therefore, for the Hilbert function of $I$ we have
    \begin{align*}
        \HF(I)(d) &= \HF(R)(d) - \HF(R/I)(d) \\
        &= \HF(R)(d) - \HF(R/J)(d) + \binom{n}{2} \binom{\binom{n}{2}-2+d}{\binom{n+1}{2}-3} \\
        &= \HF(I_{n-1}(X))(d),
    \end{align*}
    as desired.
\end{proof}

The following lemma is the key technical input for the proof of Theorem~\ref{thm:main}.

\begin{lemma}\label{lemma:I_T}
	For any spanning forest $T$ on $[n]$ we have $\HF(I) = \HF(I_T)$.
\end{lemma}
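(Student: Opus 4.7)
The plan is to compute $\HF(R/I_T)$ directly via the same short exact sequence technique used for $\HF(R/I)$ in the previous lemma, mimicking its structure exactly. Recall that after reducing to the ideal $J = \bigl(\frac{x_{11}\cdots x_{nn}}{x_{ii}} : i=1,\ldots,n\bigr)$, each off-diagonal generator of $I$ contributed $-p(d-n+1)$ to the Hilbert function, where $p(d) = \HF(R/(x_{kk}, x_{ll}))(d)$. The goal is to reproduce this exact formula for $I_T$.

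Concretely, I would build up $I_T$ from $J$ by adjoining the off-diagonal generators one at a time, in the following order: first the path generators $m_p \coloneqq \bigl(\prod_{i \notin V(p)} x_{ii}\bigr) \cdot x_p$ for paths $p$ in $T$, processed in order of increasing length, and then the generators $m_{kl} \coloneqq \frac{x_{11}\cdots x_{nn}}{x_{kk} x_{ll}} x_{kl}$ for pairs $k, l$ in different connected components of $T$. The key claim is that at each such step the colon ideal has the simple 2-variable form $(I_{\mathrm{curr}} : m) = (x_{i_0 i_0}, x_{i_1 i_1})$, where $i_0, i_1$ are either the two endpoints of the path $p$ (when $m = m_p$) or the pair $k, l$ itself (when $m = m_{kl}$). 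Using the identity $\sum_s \binom{|C_s|}{2} + \sum_{s<t} |C_s|\cdot |C_t| = \binom{n}{2}$ for the partition $\{C_s\}$ of $[n]$ into connected components of $T$, the total number of off-diagonal generators of $I_T$ equals $\binom{n}{2}$, so summing contributions yields $\HF(R/I_T)(d) = \HF(R/J)(d) - \binom{n}{2}\, p(d-n+1) = \HF(R/I)(d)$.

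The hard part will be verifying the colon computation for the path generators $m_p$ of length $L \geq 2$. To show $x_{i_0 i_0} \in I_{\mathrm{curr}} : m_p$, one checks that $x_{i_0 i_0} \cdot m_p$ is divisible by the generator $m_{p'}$ associated to the sub-path $p' = p \setminus \{i_0\}$ of length $L-1$, which has already been adjoined to $I_{\mathrm{curr}}$ by the chosen ordering; in the base case $L = 1$, the product $x_{i_0 i_0} \cdot m_p$ is instead divisible by $\frac{x_{11}\cdots x_{nn}}{x_{i_1 i_1}} \in J$. For the reverse inclusion, one shows that every other minimal candidate $g/\gcd(g, m_p)$ arising from a current generator $g$ is divisible by $x_{i_0 i_0}$ or $x_{i_1 i_1}$. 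This uses crucially the tree properties that any path in $T$ is uniquely determined by its endpoints and that removing an interior vertex of a path disconnects it, which together force the defect set $V(p) \setminus V(p')$ to contain one of the two endpoints $i_0, i_1$ whenever $p' \neq p$ and $|p'| \leq |p|$. The analogous and easier computation for the generators $m_{kl}$ exploits that $k, l$ lying in different components of $T$ prevents any path generator from contributing a smaller minimal generator to the colon.
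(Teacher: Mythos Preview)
Your approach is correct and genuinely different from the paper's. The paper does not recompute $\HF(R/I_T)$ via short exact sequences; instead it observes that since $I$ and $I_T$ are square-free monomial ideals, equality of Hilbert functions is equivalent to the two Stanley--Reisner complexes having the same $f$-vector, and then constructs an explicit degree-preserving bijection $f_d$ between the square-free monomials lying in $I$ and those lying in $I_T$. The bijection is the identity on monomials divisible by at least $n-1$ diagonal variables, and on the remaining monomials (those divisible by exactly $n-2$ diagonal variables) it replaces the unique off-diagonal generator $\tfrac{x_{11}\cdots x_{nn}}{x_{kk}x_{ll}}x_{kl}$ dividing a given monomial by the corresponding path generator $\bigl(\prod_{i\notin V(p)}x_{ii}\bigr)x_p$, with a careful bookkeeping rule for the extra off-diagonal factors.

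Your route has the virtue of directly extending the method of the preceding lemma: you verify that each of the $\binom{n}{2}$ off-diagonal generators of $I_T$ contributes exactly the same correction term $-p(d-n+1)$ as in the computation for $I$, so the two Hilbert functions coincide term by term. The colon-ideal verification you outline is sound: the key point, that $V(p)\setminus V(p')$ must contain an endpoint of $p$ whenever $p'\neq p$ and $|p'|\le|p|$, is exactly the tree property that a path containing both endpoints of $p$ must contain $p$ as a subpath. The paper's bijective argument, by contrast, is more combinatorial in flavor and yields an explicit matching of non-faces of the two Stanley--Reisner complexes, which is slightly more information than equality of $f$-vectors, though that extra information is not used elsewhere.
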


\begin{proof}
	As both ideals are square-free monomial ideals in the same polynomial ring $R$, we can consider them as the defining ideals of two Stanley--Reisner rings for two simplicial complexes on the same vertex set. The Hilbert functions of these agree if and only if the face numbers of the simplicial complexes agree in every dimension, or equivalently if in every degree the numbers of \emph{square-free} monomials contained in $I$ resp. $I_T$ agree. Hence, for each $d$ we now construct a bijection $f_d \colon \mathcal{A}_{I,d} \rightarrow \mathcal{A}_{I_T,d}$ from the set $\mathcal{A}_{I,d}$ of square-free monomials of degree $d$ in $I$ to the set $\mathcal{A}_{I_T,d}$ of square-free monomials of degree $d$ in $I_T$. We first observe that every square-free monomial of $R$ which is divisible by at least $n-1$ distinct diagonal variables lies in both $I$ and $I_T$. Therefore, we define $f_d$ as the identity on the set of all such square-free monomials of degree $d$. Our second observation is that every square-free monomial in $I$ which is not divisible by $n-1$ distinct diagonal variables, is divisible by precisely $n-2$ distinct diagonal variables, and every such square-free monomial is a multiple of a \emph{unique} generator of $I$, say of $\frac{x_{11} \cdots x_{nn}}{x_{kk} x_{ll}} x_{kl}$. Let $m$ be a square-free monomial consisting only of off-diagonal variables different from $x_{kl}$. We now explain which monomial $f_d(m \cdot \frac{x_{11} \cdots x_{nn}}{x_{kk} x_{ll}} x_{kl})$ should be. If $k$ and $l$ lie in different connected components of $T$, then we simply let $f_d(m \cdot \frac{x_{11} \cdots x_{nn}}{x_{kk} x_{ll}} x_{kl}) = m \cdot \frac{x_{11} \cdots x_{nn}}{x_{kk} x_{ll}} x_{kl}$. Otherwise, let $p$ be the unique path in $T$ between $k$ and $l$. Write $p = k_0 k_1 \cdots k_r$ with $k_0 = k$, $k_r = l$. We write $m = m_1 \cdot m_2 \cdot m_3$. Here, $m_1$ is the product of all variables in $m$ which do not correspond to edges in $p$, i.e., $m_1$ does not involve any of  the variables $x_{k_0, k_1}, x_{k_1, k_2}, \ldots, x_{k_{r-1}, l}$. Next, $m_2$ is the product of all variables in $m$ corresponding to the edges of $p$ except for the last edge $k_{r-1} l$. Finally, $m_3$ is either $x_{k_{r-1}, l}$ or $1$, depending on whether $m$ is divisible by $x_{k_{r-1}, l}$ or not. We define $m_1' \coloneqq m_1$ and $m_2'$ to be the product of all diagonal variables $x_{ii}$ such that $i = k_j$ and $x_{k_{j-1}, k_j}$ divides $m_2$. Moreover, we set $m_3' = 1$ if $m_3 = 1$ and $m_3' = x_{kl}$ otherwise, i.e., if $m_3 = x_{k_{r-1}, l}$. Then, we put
    \begin{equation*}
        f_d \colon \quad \frac{x_{11} \cdots x_{nn}}{x_{kk} x_{ll}} x_{kl} \cdot m_1 \cdot m_2 \cdot m_3 \quad \mapsto \quad \left(\prod_{i \in [n] \setminus V(p)} x_{ii}\right) \cdot x_p \cdot m_1' \cdot m_2' \cdot m_3'.
    \end{equation*}
    Observe that the right hand side is indeed a square-free monomial in $I_T$ of the same degree as the left hand side, and the right hand side is divisible by at most $n-2$ distinct diagonal variables. Indeed, the right hand side is never divisible by $x_{kk}$ and $x_{ll}$, by construction. This observation even shows that $f_d$ is injective. Indeed, it suffices to show injectivity after restricting to the subset of $\mathcal{A}_{I,d}$ of those square-free monomials divisible by exactly $n-2$ distinct diagonal variables. Assume $f_d$ maps two distinct monomials $m \cdot \frac{x_{11} \cdots x_{nn}}{x_{kk} x_{ll}} x_{kl}$ and $n \cdot \frac{x_{11} \cdots x_{nn}}{x_{aa} x_{bb}} x_{ab}$ to the same element in $\mathcal{A}_{I_T,d}$. If $k$ and $l$ lie in different connected components of $T$ and the same is true for $a$ and $b$, then clearly this is not possible. If $k$ and $l$ lie in the same connected component, then it follows that $\left(\prod_{i \in [n] \setminus V(p)} x_{ii}\right) \cdot x_p \cdot m_1' \cdot m_2' \cdot m_3'$ is not divisible by $x_{aa}$ and $x_{bb}$ which implies that $a$ and $b$ are both vertices of the path $p$, so this is only possible if also $a$ and $b$ lie in the same connected component as each other which must also be the connected component of $k$ and $l$. But then, the unique path from $a$ to $b$ in $T$ is a subpath of the path $p$ from $k$ to $l$, and the converse is true as well, hence $ab = kl$. But for fixed $kl$, the assignment $m_1 \cdot m_2 \cdot m_3 \mapsto m_1' \cdot m_2' \cdot m_3'$ is injective by construction, proving the claim. As surjectivity of $f_d$ is clear, this concludes the proof.
\end{proof}

\begin{cor}\label{cor:generic_minors_groebner}
    The $(n-1)$-minors of $X$ form a Gröbner basis of $I_{n-1}(X)$ with respect to $<_{T,G}$. Equivalently, $\init_{<_{T,G}}(I_{n-1}(X)) = I_T$.
\end{cor}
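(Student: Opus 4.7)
The plan is to deduce the corollary by a Hilbert function comparison, stringing together the previous two results. Throughout, recall that the generators of $I_T$ were introduced precisely as the $<_{T,G}$-initial terms of the standard $(n-1)$-minors of $X$, via the path expansion of Proposition~\ref{prop:pathDet}; this immediately yields the inclusion
\begin{equation*}
    I_T \subseteq \init_{<_{T,G}}(I_{n-1}(X)).
\end{equation*}
So the whole task reduces to showing that this inclusion is an equality, whence the equivalence between the two formulations in the statement follows automatically: if the leading monomials of the standard minors generate the entire initial ideal, then the minors themselves form a Gröbner basis.

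To upgrade the inclusion to an equality, I would argue via Hilbert functions. The key general fact is that passing to an initial ideal with respect to any single weight order preserves the Hilbert function (through the standard flat degeneration to the fiber at $t=0$). Since $<_{T,G}$ is iteratively defined as $<_{w_\text{diag}} \circ <_{w_T} \circ <_{w_G}$, the initial ideal can be computed in three consecutive weight-order steps:
\begin{equation*}
    \init_{<_{T,G}}(I_{n-1}(X)) = \init_{w_\text{diag}}\bigl(\init_{w_T}\bigl(\init_{w_G}(I_{n-1}(X))\bigr)\bigr),
\end{equation*}
and each step preserves the Hilbert function. Hence $\HF(\init_{<_{T,G}}(I_{n-1}(X))) = \HF(I_{n-1}(X))$.

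Now chain this with the previous two results: the preceding lemma gives $\HF(I_{n-1}(X)) = \HF(I)$ (the special case $G$ edgeless), and Lemma~\ref{lemma:I_T} gives $\HF(I) = \HF(I_T)$. Combined, $\HF(I_T) = \HF(\init_{<_{T,G}}(I_{n-1}(X)))$. Since homogeneous ideals with the same Hilbert function and one contained in the other must be equal, this forces $I_T = \init_{<_{T,G}}(I_{n-1}(X))$, as required.

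The only step that needs a careful word is the compatibility of $<_{T,G}$ with the ordinary theory of weight-order initial ideals, because $<_{T,G}$ is not a total term order. I would address this by noting that the composition is computed sequentially: at each stage the intermediate initial ideal is a homogeneous (in fact $\Z^n$-graded after the first step, monomial after the last) ideal, and each individual weight-order degeneration is flat over $K[t]$, preserving the Hilbert function. Given this, the Hilbert function comparison goes through unchanged, and since $I_T$ is already monomial, the resulting equality automatically confirms that $\init_{<_{T,G}}(I_{n-1}(X))$ is itself monomial, consistent with the remark made before Proposition~\ref{prop:pathDet}.
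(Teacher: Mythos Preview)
Your argument is correct and is exactly the intended one: the paper states the corollary without proof because it is meant to follow immediately from the two preceding lemmas via precisely the Hilbert function comparison you spell out. The only point you expand on beyond what the paper leaves implicit is the justification that $\HF(\init_{<_{T,G}}(I_{n-1}(X))) = \HF(I_{n-1}(X))$ despite $<_{T,G}$ not being a total term order; your handling of this via the iterated weight-order degeneration is standard and correct.
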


\begin{lemma}
    Let $J \subseteq K[x_1, \ldots, x_m]$ be an ideal and $g_1, \ldots, g_r$ a Gröbner basis of $J$ with respect to a monomial order $<$. Let $Z$ be a subset of the variables $x_1, \ldots, x_m$ and assume that whenever $g_i|_{Z = 0} \neq 0$, we have $\init_<(g_i) = \init_<(g_i|_{Z = 0})$. Then the non-zero $g_i|_{Z = 0}$ are a Gröbner basis for $J|_{Z = 0}$, both as an ideal of $K[x_1, \ldots, x_m]$ and of $K[\{x_1, \ldots, x_m\} \setminus Z]$, each time with respect to $<$. In other words, $\init_<(J|_{Z=0}) = \init_<(J)|_{Z=0}$, both as ideals of $K[x_1, \ldots, x_m]$ and of $K[\{x_1, \ldots, x_m\} \setminus Z]$.
\end{lemma}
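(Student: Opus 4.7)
The plan is to first establish the statement in the quotient ring $\bar R := K[\{x_1, \ldots, x_m\} \setminus Z]$, and then to bootstrap it to $R := K[x_1, \ldots, x_m]$. Write $\pi \colon R \to \bar R$ for the substitution sending every variable in $Z$ to $0$. The key preliminary observation is that the hypothesis imposes a dichotomy: for each $i$, either $\pi(g_i) = 0$, or else $\init_<(g_i)$ involves no variable from $Z$ and equals $\init_<(\pi(g_i))$ (since otherwise $\pi$ would kill this monomial).

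For the statement over $\bar R$, I would prove the following lifting claim: every nonzero $f \in \pi(J) \subseteq \bar R$ admits a lift $h \in J$ with $\pi(h) = f$ and $\init_<(h) = \init_<(f)$. Granting this, $\init_<(h) \in \init_<(J) = (\init_<(g_1), \ldots, \init_<(g_r))$, so some $\init_<(g_i)$ divides $\init_<(f)$; since $\init_<(f)$ uses no $Z$-variable, neither does $\init_<(g_i)$, and the dichotomy forces $\pi(g_i) \neq 0$ and $\init_<(\pi(g_i)) = \init_<(g_i) \mid \init_<(f)$, as required. To prove the lifting claim, start with any $h_0 \in J$ such that $\pi(h_0) = f$, and write $h_0 = f + z$ with $z$ in the ideal generated by $Z$. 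Since $f$ and $z$ have disjoint monomial supports (no $Z$-variables versus every monomial involving a $Z$-variable), $\init_<(h_0) = \max(\init_<(f), \init_<(z)) \geq \init_<(f)$. If equality fails, $\init_<(h_0) = \init_<(z)$ involves some variable in $Z$. A single reduction step modulo the Gröbner basis replaces $h_0$ by $h_1 := h_0 - c \cdot u \cdot g_i \in J$, where $u \cdot \init_<(g_i) = \init_<(h_0)$ and $c$ is a scalar, and satisfies $\init_<(h_1) < \init_<(h_0)$. Since $\init_<(h_0)$ uses a $Z$-variable, either $u$ does (so $\pi(u) = 0$) or $\init_<(g_i)$ does (so $\pi(g_i) = 0$ by the dichotomy); either way $\pi(u g_i) = 0$, hence $\pi(h_1) = f$. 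Iterating produces a strictly decreasing sequence of initial terms bounded below by $\init_<(f)$, which must terminate at a lift of the desired form.

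For the statement over $R$, I would invoke the general principle that a Gröbner basis of an ideal $I$ in a polynomial subring $\bar R \subseteq R$ remains a Gröbner basis of the extended ideal $IR$. Indeed, $R$ is a free $\bar R$-module on the monomials $m$ in the variables of $Z$; expanding any $g \in IR$ as $g = \sum_m m \cdot f_m$ with $f_m \in I$ and different $m$'s contributing monomials with distinct $Z$-content, no cancellation occurs between different $m$'s. Hence $\init_<(g) = m_0 \cdot \init_<(f_{m_0})$ for the maximizing $m_0$, and the $\bar R$-statement already established yields some $i$ with $\init_<(\pi(g_i)) \mid \init_<(f_{m_0}) \mid \init_<(g)$. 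I expect the hardest step to be the lifting argument, specifically the verification that each reduction step preserves the image under $\pi$; this is precisely where the leading-term hypothesis is used.
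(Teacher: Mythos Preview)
Your proof is correct and follows essentially the same approach as the paper's own proof. Both arguments split $h_0 = f + z$ (the paper writes $f = p + q$), perform Gr\"obner reduction steps, and verify that each step preserves the $Z$-free part by exactly your case analysis on whether the monomial cofactor or $\init_<(g_i)$ carries the $Z$-variable; the extension to the larger ring via the free $\bar R$-module structure of $R$ is likewise identical. Your explicit formulation of the dichotomy and of the ``lifting claim'' is slightly cleaner than the paper's presentation, which asserts $p' = p$ without spelling out why $\init_<(g_i)$ containing a $Z$-variable forces $g_i|_{Z=0}=0$, but the underlying mathematics is the same.
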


\begin{proof}
    We first consider $J|_{Z = 0}$ as an ideal of the smaller polynomial ring $K[\{x_1, \ldots, x_m\} \setminus Z]$ and show that the $g_i|_{Z=0}$ form a Gröbner basis with respect to $<$. For this, let $f \in J$. We need to prove that if $f|_{Z=0} \neq 0$, then $\init(f|_{Z = 0})$ is divisible by some $\init(g_i|_{Z=0})$. For this, we write $f = p + q$ where $p$ is the sum of all terms of $f$ not divisible by any variable in $Z$ and $q = f - p$, i.e., $q$ is the sum of all terms of $f$ divisible by at least one variable in $Z$. Clearly, $p|_{Z = 0} = p$ and $q|_{Z = 0} = 0$, so $f|_{Z = 0} = p$, and we assume $p \neq 0$. If the initial term of $f$ is a summand of $p$, then we are done since the $g_i$ form a Gröbner basis for $J$. Otherwise, the leading term $m$ of $f$ is from $q$. Then there is some $g_i$ and a scalar multiple of a monomial, say $n$, such that $m = n \cdot \init(g_i)$. Since $m|_{Z = 0} = 0$, at least one of $n$ and $\init(g_i)$ is divisible by some variable in $Z$. Consider now $f' \coloneqq f - n \cdot g_i \in J$ and write $f' = p' + q'$ as before. Then $p' = p$ and $\init(q') < \init(q)$. If now $\init(f')$ comes from $p' = p$, we are done. Otherwise the initial term of $f'$ lies in $q'$ again, and we continue in the same way. But we cannot always choose the initial term from the $q$-part since this would result in an infinite chain of strictly decreasing monomials with respect to the monomial order $<$, proving our claim.

    Secondly, the $g_i|_{Z=0}$ even form a Gröbner basis of $J|_{Z = 0}$ if the latter is considered as an ideal in the larger polynomial ring $K[x_1, \ldots, x_m]$. Indeed, write $J_1$ for the ideal in the smaller polynomial ring and $J_2$ for the ideal in the larger polynomial ring. Then
    \begin{equation*}
        J_2 \cong J_1 \otimes_{K[\{x_1, \ldots, x_m\} \setminus Z]} K[x_1, \ldots, x_m].
    \end{equation*}
    Let now $f \in J_2 \setminus \{0\}$. Then by the preceeding isomorphism, $f$ can be written uniquely as a sum of monomials in $Z$ with coefficients in $J_1$. The initial term of $f$ with respect to $<$ is therefore the initial term of some non-zero element of $J_1$ multiplied by some monomial in the $Z$-variables. This is a multiple of the initial term of an element of $J_1$ and therefore divisible by some $\init(g_i|_{Z=0})$ by the first part of the proof.
\end{proof}

\begin{cor}\label{cor:height}
    The $(n-1)$-minors of $X_G$ form a Gröbner basis of $I_{n-1}(X_G)$ with respect to $<_{T,G}$ with square-free initial ideal $I_T|_{Z = 0} \subseteq R$. In particular, $I_{n-1}(X_G)$ is always radical and
    \begin{equation*}
        \height(I_{n-1}(X_G)) = \begin{cases} 3 & \text{if } G \text{ is connected}, \\ 2 & \text{otherwise}. \end{cases}
    \end{equation*}
\end{cor}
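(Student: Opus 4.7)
The plan is to combine Corollary~\ref{cor:generic_minors_groebner} with the preceding lemma and then analyze the resulting square-free monomial initial ideal combinatorially.

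First I would apply the preceding lemma to $J=I_{n-1}(X)$ with its Gröbner basis consisting of the standard $(n-1)$-minors of $X$. The hypothesis to check is that whenever a minor $g_{kl}$ does not vanish after substituting $Z=0$, its initial term with respect to $<_{T,G}$ involves no variable in $Z$. By Corollary~\ref{cor:generic_minors_groebner}, the initial term of $g_{kl}$ is a generator of $I_T$ and comes in three flavors: the diagonal monomials $\prod_{i\neq k}x_{ii}$ (when $k=l$), the path monomials $x_p\prod_{i\notin V(p)}x_{ii}$ with $p$ a path in $T\subseteq G$ (when $k,l$ are in the same $T$-component), and the third family $\frac{x_{11}\cdots x_{nn}}{x_{kk}x_{ll}}x_{kl}$ (when $k,l$ lie in different $T$-components). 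The first two avoid $Z$ automatically; in the third case $x_{kl}\in Z$, but $k$ and $l$ then lie in different components of $G$ as well, so Proposition~\ref{prop:pathDet} applied to $X_G$ forces $g_{kl}|_{Z=0}=0$. The preceding lemma then yields that the non-zero substitutions — i.e., the $(n-1)$-minors of $X_G$ — form a Gröbner basis of $I_{n-1}(X_G)$ with $\init_{<_{T,G}}(I_{n-1}(X_G))=I_T|_{Z=0}$.

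This initial ideal is square-free, since it is obtained from a square-free monomial ideal by discarding the generators divisible by some variable in $Z$; as having a radical initial ideal forces an ideal to be radical, $I_{n-1}(X_G)$ is radical. For the height, Gröbner degeneration preserves Krull dimension, so it suffices to compute the height of $I_T|_{Z=0}$ as the minimum size of a set of variables hitting each remaining generator, i.e., each $\prod_{i\neq j}x_{ii}$ and each $x_p\prod_{i\notin V(p)}x_{ii}$ with $p$ a path of the spanning forest $T$. If $G$ has at least two components $C_1,\ldots,C_r$, choose $a\in C_1$ and $b\in C_2$; then $\{x_{aa},x_{bb}\}$ is a cover, since every path $p$ in $T$ lies inside a single component and therefore avoids at least one of $a,b$, while no single variable covers all diagonal generators, giving height~$2$. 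If $G$ is connected, any hypothetical $2$-cover must consist of two diagonal variables $x_{aa},x_{bb}$ (otherwise some $\prod_{i\neq j}x_{ii}$ is uncovered), but then the unique $T$-path from $a$ to $b$ produces an uncovered path generator since $a,b\in V(p)$, so the height is at least~$3$; for the matching upper bound I would pick any edge $ab$ of $T$ and use the cover $\{x_{aa},x_{bb},x_{ab}\}$, noting that any path $p$ in $T$ with $a,b\in V(p)$ must contain the edge $ab$ by uniqueness of paths in trees, so $x_{ab}\mid x_p$.

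The step I expect to be least automatic is the verification of the hypothesis of the preceding lemma: the observation that in the third case the entire minor vanishes modulo $Z$ is not cosmetic but relies on Proposition~\ref{prop:pathDet}, and without it one cannot conclude that $I_T|_{Z=0}$ is the correct initial ideal, which is the cornerstone of both the radicality claim and the height computation.
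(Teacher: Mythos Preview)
Your proposal is correct and follows essentially the same approach as the paper: apply the preceding lemma to pass from Corollary~\ref{cor:generic_minors_groebner} to the sparse case, then compute the height of the square-free monomial ideal $I_T|_{Z=0}$. Your verification of the lemma's hypothesis via Proposition~\ref{prop:pathDet} is more explicit than the paper (which simply writes ``it only remains to prove the last claim''), and your height computation is phrased in terms of vertex covers rather than minimal primes, but these are the same argument; in particular, your explicit size-$3$ cover $\{x_{aa},x_{bb},x_{ab}\}$ for an edge $ab$ of $T$ is exactly what underlies the paper's remark that ``some [minimal primes] are indeed generated by exactly three distinct variables.''
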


\begin{proof}
    It only remains to prove the last claim. Since the polynomial ring $R$ is regular, $\dim(R/J) = \dim(R) - \height(J)$ for every ideal $J \subseteq R$. This proves $\height(I_{n-1}(X_G)) = \height(I_T|_{Z = 0})$. If $G$ is connected, $T$ is a spanning tree with vertex set $[n]$, and so $I_T|_{Z = 0} = I_T$. Let $\mathfrak{p}$ be a minimal prime ideal above $I_T$. As $\mathfrak{p}$ contains the first $n$ generators of $I_T$ which are the $n$ possible products of $n-1$ distinct diagonal variables, $\mathfrak{p}$ contains at least two distinct diagonal variables, say $x_{ii}$ and $x_{jj}$. All generators of $I_T$ which are not divisible by either of $x_{ii}$ and $x_{jj}$ are divisible by $x_p$, where $p$ is the unique path in $T$ between $i$ and $j$. Hence, every minimal prime $\mathfrak{p}$ above $I_T$ is generated by at least three distinct variables, and some are indeed generated by \emph{exactly} three distinct variables, so $\height(I_T) = 3$. If $G$ is not connected, let $i$ and $j$ be two vertices lying in different connected components. Then there is no path of $T$ involving both $i$ and $j$, hence $\mathfrak{p} = (x_{ii}, x_{jj})$ is a minimal prime above $I_T|_{Z = 0}$, so $\height(I_T|_{Z = 0}) = 2$ in this case.
\end{proof}

\section{Minimal free resolution of $I_{n-1}(X_G)$}\label{section_minimal_free_res}

\subsection{The minimal free resolution in the generic case}

We first recall a foundational result by J\'ozefiak. For this result, $S$ denotes any noetherian commutative ring with $1$ and $W$ any symmetric $n \times n$ matrix with coefficients in $S$. As usual, we denote by $S^{n \times n}$ the free $S$-module of $n \times n$ matrices and by $\tr \colon S^{n \times n} \rightarrow S$ the trace map whose kernel is a free $S$-module of rank $n^2 - 1$. By $\Sym^2(S^n)$ we denote the free $S$-module of symmetric $n \times n$ matrices and by $A_n(S)$ that of alternating matrices, i.e., the skew-symmetric $n \times n$ matrices with zeros on the diagonal.

\begin{thm}[{\cite[Theorem~3.1]{Jozefiak1978Ideals}}]
    Let $W \in \Sym^2(S^n)$ and $Y \in \Sym^2(S^n)$ the cofactor matrix of $W$. If the grade of $I_{n-1}(W) \subseteq S$ equals $3$, then the complex of free $S$-modules
    \begin{equation*}
        L(W) \colon \quad 0 \rightarrow A_n(S) \overset{d_3}{\longrightarrow} \ker(\tr \colon S^{n \times n} \rightarrow S) \overset{d_2}{\longrightarrow} S^{n \times n}/A_n(S) \overset{d_1}{\longrightarrow} S \rightarrow S/I_{n-1}(W) \rightarrow 0
    \end{equation*}
    is exact and provides a free resolution of $S/I_{n-1}(W)$. Here,
    \begin{align*}
        d_1(M \ \mathrm{mod} \ A_n(S)) &\coloneqq \tr (YM), \\
        d_2(N) &\coloneqq WN \ \mathrm{mod} \ A_n(S), \\
        d_3(A) &\coloneqq AW.
    \end{align*}
\end{thm}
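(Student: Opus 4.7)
The plan is to apply the Buchsbaum--Eisenbud acyclicity criterion: a bounded complex of finitely generated free $S$-modules is acyclic in positive degrees if and only if, for each $k \geq 1$, the rank equality $\rk(d_k) + \rk(d_{k+1}) = \rk(F_k)$ holds and $\text{grade}(I(d_k)) \geq k$, where $I(d_k)$ denotes the ideal of $\rk(d_k)$-minors of $d_k$. The argument splits into verifying that $L(W)$ is a complex, reducing to the generic situation, and then checking the criterion there.

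The first point is a direct matrix computation. The map $d_1$ factors through $S^{n\times n}/A_n(S)$ since $Y$ is symmetric (so $\tr(YM) = \tr(YM^T)$); the map $d_3$ lands in $\ker(\tr)$ since $\tr(AW) = 0$ when $A$ is alternating and $W$ symmetric; $d_1 \circ d_2 = 0$ because $YW = \det(W)\cdot \mathrm{Id}$ and $N \in \ker(\tr)$, so $\tr(YWN) = \det(W)\tr(N) = 0$; and $d_2 \circ d_3 = 0$ because $(WAW)^T = -WAW$ whenever $A$ is alternating.

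For the reduction to the generic case, set $S_0 \coloneqq \Z[x_{ij} : 1 \leq i \leq j \leq n]$ and $W_0 \coloneqq (x_{ij})$; the ring $S_0$ is Cohen--Macaulay and $\height(I_{n-1}(W_0)) = 3$ by classical results on symmetric determinantal ideals. Any $(S, W)$ satisfying the hypothesis arises by base change along a homomorphism $\phi \colon S_0 \to S$ under which $L(W) \cong L(W_0) \otimes_{S_0} S$. Once $L(W_0)$ is known to be a free resolution, acyclicity transfers to $L(W)$ via the Buchsbaum--Eisenbud criterion applied to the specialized complex: the rank conditions are generic, and the grade conditions are forced by $\text{grade}(I_{n-1}(W)) \geq 3$ together with the fact — verified in the generic case — that the ideals $I(d_k)$ share the radical of $I_{n-1}(W_0)$.

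It remains to establish acyclicity for $L(W_0)$. After inverting a suitable generic minor, $W_0$ has rank $n-1$ and $Y_0$ has rank $1$, so one reads off $\rk(d_1) = 1$, $\rk(d_2) = \binom{n+1}{2} - 1$, $\rk(d_3) = \binom{n}{2}$, which satisfy the rank equations. The bound $\text{grade}(I(d_1)) = \text{grade}(I_{n-1}(W_0)) = 3$ is immediate. For $d_2$ and $d_3$, the cleanest approach is to exploit the self-duality of $L(W_0)$: the trace pairings on $\Sym^2(S_0^n)$, $\ker(\tr)$, and $A_n(S_0)$ are perfect, and a careful sign check identifies the transpose complex with $L(W_0)$ itself up to shift. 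This forces $I(d_3)$ and $I(d_1)$ to share a radical, hence grade $3$, and analogously $I(d_2) \supseteq I_{n-1}(W_0)$, giving grade at least $2$. The main obstacle is precisely this last step — identifying the radicals of $I(d_2)$ and $I(d_3)$ with that of $I_{n-1}(W_0)$; the self-duality gives the cleanest path but requires careful bookkeeping of the sign conventions in both the trace pairings and the matrix representations of the differentials.
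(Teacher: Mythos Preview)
The paper does not prove this theorem; it is quoted verbatim from J\'ozefiak's original paper \cite[Theorem~3.1]{Jozefiak1978Ideals} as a foundational input, with no argument supplied. So there is no ``paper's own proof'' to compare your proposal against.

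That said, your sketch is a reasonable outline of how J\'ozefiak-type results are typically established, and the verification that $L(W)$ is a complex is correct. The genuine soft spot is the final paragraph. Self-duality of $L(W_0)$ does identify the radicals of $I(d_1)$ and $I(d_3)$, which handles the grade condition for $d_3$. But for $d_2$ self-duality tells you nothing new (the middle map is its own dual), and your assertion that $I(d_2) \supseteq I_{n-1}(W_0)$ is not justified: $I(d_2)$ is the ideal of $\bigl(\binom{n+1}{2}-1\bigr)$-minors of a large matrix whose entries are the $x_{ij}$, and it is not at all obvious that the $(n-1)$-minors of $W_0$ lie in it. One standard route (and essentially what J\'ozefiak does) is to analyze $I(d_2)$ directly, or to first prove acyclicity by an independent method (e.g.\ a mapping-cone or representation-theoretic construction) and then read off the Fitting-ideal radicals from the Buchsbaum--Eisenbud criterion run in reverse. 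As written, your argument for $\mathrm{grade}(I(d_2)) \geq 2$ is a gap.
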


We observe $S^{n \times n}/A_n(S) \cong S^{\binom{n+1}{2}} \cong \Sym^2(S^n)$, where we interpret $S^{\binom{n+1}{2}}$ as the $S$-module of upper triangular matrices. The first isomorphism is then given by taking any representing matrix $M \in S^{n \times n}$ and subtracting the alternating matrix from $M$ whose strict lower triangle (all entries strictly below the diagonal) agrees with that of $M$. The second isomorphism is given by taking the upper triangular matrix in $S^{\binom{n+1}{2}}$ and replacing its strict lower triangle of zeros by its flipped strict upper triangle.

% A second observation is that all maps in $L(W)$ are well-defined. The central observation here is that the trace of the product of an alternating matrix $A$ and a symmetric matrix $B$ is always zero:
% \begin{align*}
%     \tr(AB) &= \sum_{i,k = 1}^n a_{ik} b_{ki} \\
%     &= \sum_{i<k} a_{ik} b_{ki} + \sum_{i>k} a_{ik} b_{ki} + \sum_{i=k} 0 \cdot b_{ki} \\
%     &= \sum_{i<k} a_{ik} b_{ki} + \sum_{k<i} (-a_{ki}) b_{ik} \\
%     &= 0.
% \end{align*}
% A similar idea also shows that $L(W)$ is always a complex. For example, $d_2 \circ d_3 = 0$ means that $WAW$ is an alternating matrix whenever $A$ is. Indeed, a similar computation to the one above shows $(WAW)_{ii} = 0$ for all $i = 1, \ldots, n$. Moreover, $(WAW)^T = -WAW$, so $WAW$ is indeed alternating. The equation $d_1 \circ d_2 = 0$ follows from $YW = \det(W) \mathbbm{1}_n$ since the trace is $S$-linear.

Let us return to the case where $S = R$ is the polynomial ring. If all entries of $W$ are homogeneous of the same positive degree or zero and $\mathrm{grade}(I_{n-1}(W)) = 3$ (which is the largest possible value), then J\'ozefiak's result shows that $L(W)$ is the minimal graded free resolution of $R/I_{n-1}(W)$. If all non-zero entries of $W$ are of degree $1$, then all non-zero entries of $Y$ are of degree $n-1$, and hence $L(W)$ has the shape
\begin{equation*}
    0 \rightarrow R(-(n+1))^{\binom{n}{2}} \rightarrow R(-n)^{n^2 - 1} \rightarrow R(-(n-1))^{\binom{n+1}{2}} \rightarrow R \rightarrow R/I_{n-1}(X) \rightarrow 0.
\end{equation*}
In particular, this is true if $W$ is the sparse generic symmetric matrix $X_G$ where $G$ is \emph{connected on $[n]$} by Corollary~\ref{cor:height}, using that grade and height agree for ideals in a regular ring.

\subsection{Matrices representing the $d_i$}

We choose bases of the free $R$-modules in J\'ozefiak's minimal graded free resolution of $R/I_{n-1}(X)$. For $R^{n \times n}/A_n(R) = R(-(n-1))^{\binom{n+1}{2}}$ we choose the graded basis $E_{ij}$ for $i \leq j$, where $E_{ij}$ is the matrix with a $1$ in position $(i,j)$ and zeros everywhere else. We order this basis as
\begin{equation*}
    E_{11},E_{22}, \ldots, E_{nn}, E_{12}, \ldots, E_{1,n}, \ldots, E_{n-1,n}.
\end{equation*}
For $\ker(\tr: R^{n \times n} \rightarrow R) \cong R(-n)^{n^2 - 1}$ we take the graded basis
\begin{equation*}
    E_{22}-E_{11}, E_{33}-E_{11}, \ldots, E_{nn}-E_{11}, E_{12}, E_{21}, \ldots, E_{1,n},E_{n,1}, \ldots, E_{n-1,n},E_{n,n-1}.
\end{equation*}
Finally, for $A_n(R) \cong R(-(n+1))^{\binom{n}{2}}$ we take the graded basis
\begin{equation*}
    E_{12} - E_{21}, \ldots, E_{1,n} - E_{n,1}, \ldots, E_{n-1,n} - E_{n,n-1}.
\end{equation*}

The matrix of $d_1$, with respect to these bases, is simply the row vector whose entries are the cofactors of $X$, ordered in the same way as the basis of $R^{n \times n}/A_n(R)$, i.e., the principal minors come first and then all the others with the appropriate signs in the usual lexicographic order.

For $d_2$ we have
\begin{equation*}
    d_2(E_{ii}-E_{11}) = \sum_{\substack{k = 1 \\ k \neq i}}^n (-x_{1k}) E_{1k} + \sum_{k=2}^i x_{ki} E_{ki} + \sum_{k = i+1}^n x_{ik} E_{ik}
\end{equation*}
for $i > 1$ and
\begin{equation*}
    d_2(E_{ij}) = \sum_{k=1}^j x_{ki} E_{kj} + \sum_{k = j+1}^n x_{ki} E_{jk}.
\end{equation*}
for $i \neq j$. The matrix of $d_2$ can hence be written as a block matrix
\begin{equation*}
    [d_2] = \begin{pmatrix}
  \begin{matrix}
  -x_{11} & -x_{11} & -x_{11} & \hdots & -x_{11} \\
  x_{22} & 0 & 0 & \hdots & 0 \\
  0 & x_{33} & 0 & \hdots & 0 \\
  0 & 0 & x_{44} & \hdots & 0 \\
  \vdots & \vdots & \vdots & \ddots & \vdots \\
  0 & 0 & 0 & \hdots & x_{nn}
  \end{matrix}
  & \rvline &  \Gamma_2\\
\hline
  \ast & \rvline & \Delta_2
\end{pmatrix},
\end{equation*}
where the only non-zero entries of $\Gamma_2$ in the row corresponding to $E_{ii}$ is at the columns corresponding to $E_{ji}$ for some $j$, and the entry there is precisely $x_{ij}$ (the order of indices matters for the basis elements but not for the variables!). The only non-zero entries of $\Delta_2$ in the row corresponding to $E_{ij}$, $i < j$, is in the columns corresponding to $E_{ij}$, $E_{ji}$, $E_{ki}$ for $k \neq i,j$, and $E_{kj}$ for $k \neq i,j$, with entries $x_{ii}$, $x_{jj}$, $x_{jk}$, and $x_{ik}$, respectively. In particular, all non-zero entries of $[d_2]$ are simply variables up to sign. Moreover, all non-zero entries of $\Gamma_2$ are \emph{off-diagonal} variables, and no variable ever appears twice in the same row of $\Gamma_2$ or $\Delta_2$. This implies that, by setting to zero any set of entries of $\Gamma_2$, $\Delta_2$ and of the lower left block $\ast$, those columns of $[d_2]$ that do not become identically zero will remain linearly independent over the ground field $K$.

For $d_3$ we have
\begin{equation*}
    d_3(E_{ij} - E_{ji}) = x_{ij} (E_{ii} - E_{11}) - x_{ij} (E_{jj} - E_{11}) + x_{jj} E_{ij} - x_{ii} E_{ji} + \sum_{\substack{k = 1 \\ k \neq i,j}}^n x_{jk} E_{ik} + \sum_{\substack{k = 1 \\ k \neq i,j}}^n (-x_{ik}) E_{jk} 
\end{equation*}
for $i < j$. Again, all non-zero entries of $[d_3]$ are variables up to sign. If we write $[d_3]$ as a block matrix
\begin{equation*}
    [d_3] = \begin{pmatrix} \Gamma_3 \\ \hline \Delta_3 \end{pmatrix},
\end{equation*}
where the rows of $\Gamma_3$ correspond to the basis elements $E_{ii} - E_{11}$ for $i = 2, \ldots, n$ while the rows of $\Delta_3$ correspond to the remaining basis elements of $\ker(\tr)$, then no variable ever occurs twice in the same row of $[d_3]$. Moreover, all variables appearing in $\Gamma_3$ are off-diagonal and precisely one diagonal variable appears in every row of $\Delta_3$.

\subsection{Homogenizing $L(X)$}

Let $G$ be any undirected, simple graph on $[n]$. We will now homogenize $L(X)$ with respect to the weight $w_G$ from Section~\ref{section_groebner} by introducing a new variable $t$ with weight $1$. We start by homgenizing all $(n-1)$-cofactors which are the entries of the row vector $[d_1]$. We fix one cofactor. All terms of the latter with $w_G$-degree strictly less than the $w_G$-degree of the cofactor itself are now multiplied by the appropriate power of $t$, as usual. The resulting row vector is denoted $[d_1]^h$. Next, we fix a column of $[d_2]$. Let $d$ be the maximum $w_G$-degree of the product of an entry of this column with the corresponding $(n-1)$-cofactor of $[d_1]^h$. Then we multiply every entry of this column of $[d_2]$ by the appropriate power of $t$ such that the product of the resulting entry and the corresponding minor has combined degree equal to $d$. This we do for every column of $[d_2]$ and call the result $[d_2]^h$. Note that no column of $[d_2]^h$ is divisible by $t$. Similarly, we fix a column of $[d_3]$ and we choose any row of $[d_2]^h$. Then we homogenize this column of $[d_3]$ in the same way as we did for $[d_2]$. The result does not depend on the chosen row of $[d_2]^h$. Again, no column of $[d_3]^h$ will be divisible by $t$. By construction, the resulting sequence of matrices still gives a \emph{complex} $L(X)^h$, i.e. $[d_2]^h [d_3]^h = 0$ and $[d_1]^h [d_2]^h = 0$.

\begin{prop}\label{prop:res_initial}
    The complex $L(X)^h|_{t=0}$ is exact and therefore the graded minimal free resolution of $R/\init_{w_G}(I_{n-1}(X))$.
\end{prop}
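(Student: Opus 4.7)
The approach is via the standard Gröbner deformation framework, combining J\'ozefiak's theorem (exactness of $L(X)$) and Corollary~\ref{cor:generic_minors_groebner}, which says that the $(n-1)$-minors of $X$ form a Gröbner basis of $I_{n-1}(X)$ with respect to an order refining the weight order $<_{w_G}$. Throughout, I would view $L(X)^h$ as a complex of free graded $R[t]$-modules with $t$ carrying $w_G$-weight $1$.

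I would first identify the image of $d_1^h|_{t=0}$. The entries of $[d_1]^h$ are the $w_G$-weight-homogenizations of the $(n-1)$-minors of $X$, so the Gröbner basis property guarantees that they generate $\operatorname{hom}_{w_G}(I_{n-1}(X)) \subseteq R[t]$, and their $t=0$ specializations generate $\init_{w_G}(I_{n-1}(X)) \subseteq R$. The core observation is then that $R[t]/\operatorname{hom}_{w_G}(I_{n-1}(X))$ is $K[t]$-flat — a standard property of weight-homogenization, with a monomial $K[t]$-basis provided by the monomials of $R$ outside $\init_{w_G}(I_{n-1}(X))$ — so $t$ is a nonzerodivisor on this cokernel. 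Thus, once $L(X)^h$ is known to be a free resolution of this cokernel over $R[t]$, tensoring with $R[t]/(t) = R$ preserves exactness and produces the sought-after resolution of $R/\init_{w_G}(I_{n-1}(X))$.

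To prove exactness of $L(X)^h$, I would argue as follows. After inverting $t$, the substitution $x_{ij} \mapsto t^{w_G(x_{ij})} x_{ij}$ followed by rescaling basis vectors by appropriate powers of $t$ yields a complex isomorphism $L(X)^h \otimes_{R[t]} R[t, t^{-1}] \cong L(X) \otimes_R R[t, t^{-1}]$; since $L(X)$ is exact by J\'ozefiak, so is this localization, and hence each $H_i(L(X)^h)$, $i \geq 1$, is $t$-torsion. Combined with the $K[t]$-flatness of the cokernel and a standard acyclicity argument (a graded Peskine--Szpiro-type criterion applied to $L(X)^h$, a bounded free complex of length $3$ over $R[t]$), this forces all higher homology of $L(X)^h$ to vanish.

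Finally, minimality is automatic: every nonzero entry of each $[d_i]^h|_{t=0}$ is a $w_G$-homogeneous polynomial of positive degree and hence lies in the homogeneous maximal ideal of $R$. The main technical point is the acyclicity step at the end of the previous paragraph, which nonetheless fits into a well-established template in Gröbner deformation theory; all the combinatorial and structural input needed to apply it has been made explicit in Sections~\ref{section_groebner} and the preceding subsections.
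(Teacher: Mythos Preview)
Your flat-family set-up is correct as far as it goes: $L(X)^h\otimes R[t,t^{-1}]$ is exact by J\'ozefiak, $t$ is a nonzerodivisor on $R[t]/\operatorname{hom}_{w_G}(I_{n-1}(X))$, and exactness of $L(X)^h$ over $R[t]$ would indeed transfer to $t=0$. The gap is the step you call ``a standard acyclicity argument.'' Knowing that $H_i(L(X)^h)$ is $t$-torsion for $i\geq 1$ and that $t$ is regular on $H_0$ is \emph{not} enough to force $H_i=0$: over $K[t]$ the complex $0\to K[t]\xrightarrow{\,t\,}K[t]\xrightarrow{\,0\,}K[t]$ already satisfies both hypotheses but has $H_1=K[t]/(t)\neq 0$. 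The graded Peskine--Szpiro lemma needs the higher homology to have depth~$0$ at the irrelevant ideal, whereas ``$t$-torsion'' only says some associated prime contains $t$. In fact, exactness of the homogenized complex is \emph{equivalent} to the Betti numbers not jumping from $I_{n-1}(X)$ to $\init_{w_G}(I_{n-1}(X))$: the minimal graded free resolution of $R[t]/I^h$ over $R[t]$ always specializes at $t=0$ to the minimal resolution of $R/\init_{w_G}(I)$, so its ranks are $\beta_i(R/\init_{w_G}(I))$, and $L(X)^h$ (with ranks $\beta_i(R/I)$) can only be that resolution if these numbers coincide. This is a genuine theorem, not a formality.

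That theorem is precisely what the paper supplies, by a different route and with different tools. The paper does not try to prove $L(X)^h$ is exact over $R[t]$ at all. Instead it uses the square-free initial ideal $I_T$ constructed in Section~\ref{section_groebner}: by Conca--Varbaro \cite{Conca2020Squarefree}, a square-free Gr\"obner degeneration preserves depth, so $R/I_T$ and then $R/\init_{w_G}(I_{n-1}(X))$ are Cohen--Macaulay; then \cite[Example~1.2]{Conca2006Nice} upgrades this to equality of all graded Betti numbers among $I_{n-1}(X)$, $\init_{w_G}(I_{n-1}(X))$ and $I_T$. With the Betti numbers known, the paper finishes by checking directly that the columns of $[d_2]^h|_{t=0}$ and $[d_3]^h|_{t=0}$ remain $K$-linearly independent (no column vanishes; no variable repeats in any row of the relevant blocks), so these columns must minimally generate the corresponding syzygy modules. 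Your argument lacks any substitute for the Conca--Varbaro input, and without it the acyclicity step fails.
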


\begin{proof}
    The reasoning is very similar to the proof of \cite[Proposition~3.8]{Boocher2012Paper}. Clearly, $L(X)^h|_{t=0}$ is a complex of free $R$-modules. The image of $[d_1]^h|_{t=0}$ agrees with $\init_{w_G}(I_{n-1}(X)) \subseteq R$ by Corollary~\ref{cor:generic_minors_groebner} since $<_{T,G}$ refines the weight order $<_{w_G}$. But we can say more. Recall that $I_T = \init_{<_{T,G}}(I_{n-1}(X)) = \init_{<_{T,G}} \init_{<_{w_G}}(I_{n-1}(X))$. The main result of \cite{Conca2020Squarefree} implies that the depth is preserved under square-free Gröbner degenerations. Hence, we first deduce that $R/I_T$ is Cohen--Macaulay becauso so is $R/I_{n-1}(X)$, and secondly we obtain that $R/\init_{<_{w_G}}(I_{n-1}(X))$ is Cohen--Macaulay since so is $R/I_T$. Moreover, the Hilbert functions of $I_T$, of $\init_{<_{w_G}}(I_{n-1}(X))$ and of $I_{n-1}(X)$ agree, so that all three ideals are Cohen--Macaulay of minimal multiplicity, and $I_T$ is an initial ideal of both $\init_{<_{w_G}}(I_{n-1}(X))$ and $I_{n-1}(X)$. From \cite[Example~1.2]{Conca2006Nice} it now follows that in fact all graded Betti numbers of these three ideals coincide.

    Therefore, it is now enough to show that, after tensoring with $R/\mathfrak{m} = K$, the dimension of the images of $[d_2]^h|_{t=0}$ and $[d_3]^h|_{t=0}$ agree with $\beta_{2,n}(R/I_{n-1}(X))$ and $\beta_{3,n+1}(R/I_{n-1}(X))$, respectively. This follows from the discussion above since no column of $[d_2]^h|_{t=0}$ and $[d_3]^h|_{t=0}$ is identically zero and no row of $\Gamma_2$, $\Delta_2$ or $[d_3]$ contains any variable more than once.
\end{proof}

\begin{thm}\label{thm:pruning_works}
    Let $G$ be any undirected, simple graph on $[n]$. The minimal graded free resolution of $R/I_{n-1}(X_G)$ is obtained from $L(X)$ via Boocher's pruning procedure.
\end{thm}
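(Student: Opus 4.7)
The approach is to follow Boocher's pruning machinery from \cite{Boocher2012Paper, Boocher2013Thesis}, combined with the structural description of $[d_1], [d_2], [d_3]$ in Section~\ref{section_minimal_free_res} and the Buchsbaum--Eisenbud acyclicity criterion. Write $\tilde L$ for the complex of free graded $R$-modules produced by applying Boocher's pruning to $L(X)|_{Z=0}$. Every non-zero entry of $\tilde L$ is either a variable (up to sign) or a non-vanishing $(n-1)$-cofactor of $X_G$, so $\tilde L$ is automatically a minimal graded complex, and by construction $H_0(\tilde L) = R/I_{n-1}(X_G)$. When $G$ is connected we have $D_G = 0$ and no pruning occurs, so $\tilde L = L(X_G)$ is already a minimal free resolution by J\'ozefiak's theorem using $\height(I_{n-1}(X_G)) = 3$ from Corollary~\ref{cor:height}, as indicated after Theorem~\ref{thm:main}; henceforth the interesting case is $G$ disconnected.

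For the ranks, the plan is to use Proposition~\ref{prop:pathDet}: the off-diagonal cofactor at $(k,l)$ vanishes iff $k, l$ lie in distinct connected components of $G$, giving $D_G$ zero columns of $[d_1]|_{Z=0}$ (the diagonal cofactors never vanish). After deleting these $D_G$ columns and the $D_G$ corresponding rows $E_{kl}$ of $[d_2]|_{Z=0}$, a direct case analysis based on the block structure of $\Gamma_2, \Delta_2$—specifically that no variable repeats in a row, that $\Gamma_2$ has only off-diagonal entries, and the explicit support pattern of $\Delta_2$—should show that exactly $2D_G$ columns of the cropped $[d_2]$ vanish identically (one per pair of basis elements $E_{ab}, E_{ba}$ with $a, b$ in distinct components). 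Iterating for $[d_3]$ with the analogous properties of $\Gamma_3, \Delta_3$ then yields $D_G$ further zero columns. The resulting ranks $\binom{n+1}{2}-D_G$, $n^2-1-2D_G$, $\binom{n}{2}-D_G$ match the claimed Betti numbers.

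The main obstacle is to prove exactness of $\tilde L$, and the plan is to invoke the Buchsbaum--Eisenbud acyclicity criterion. The rank-additivity $\mathrm{rank}(\tilde L_i) = \mathrm{rank}(\tilde d_i) + \mathrm{rank}(\tilde d_{i+1})$ follows from the rank count together with the exactness of $L(X)$. For the depth condition $\mathrm{depth}(I(\tilde d_i)) \geq i$ on the Fitting ideals, the case $i = 1$ is immediate since $I(\tilde d_1) = I_{n-1}(X_G)$ has height at least $2$ by Corollary~\ref{cor:height}. For $i = 2, 3$, I would compare $I(\tilde d_i)$ with the analogous Fitting ideal arising in the minimal free resolution $L(X)^h|_{t=0}$ of $R/\init_{w_G}(I_{n-1}(X))$ provided by Proposition~\ref{prop:res_initial}; the depth conditions on the latter (imposed by its exactness, combined with the height computations in Corollary~\ref{cor:height} for $I_T$ and $I_T|_{Z=0}$) transfer to the required depth bounds on $I(\tilde d_i)$ via a semicontinuity argument along the deformation $L(X)^h$, yielding exactness of $\tilde L$ and concluding the proof.
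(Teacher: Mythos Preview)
Your plan correctly identifies the module ranks in $\tilde L$ and the reduction of the connected case to J\'ozefiak, but the exactness argument for disconnected $G$ has a genuine gap.

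The claimed ``semicontinuity argument along the deformation $L(X)^h$'' does not reach $\tilde L$. The fibres of $L(X)^h$ over $t$ are $L(X)$ at $t=1$ and $L(X)^h|_{t=0}$ at $t=0$; neither is $\tilde L$, which arises from the \emph{separate} specialization $Z \to 0$ followed by pruning. There is no single one-parameter family here along which depths of Fitting ideals could be compared. Likewise, the rank-additivity condition in Buchsbaum--Eisenbud concerns the ranks of the \emph{maps} $\tilde d_i$, not of the free modules; those ranks can drop under $Z \to 0$ and pruning, and exactness of $L(X)$ alone says nothing about this.

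What actually links $\tilde L$ to $L(X)^h|_{t=0}$, and this is the paper's argument rather than semicontinuity, is the $w_G$-graded block decomposition $[d_i]^h = \left(\begin{smallmatrix} A_i & B_i \\ C_i & D_i \end{smallmatrix}\right)$. Reading off the $w_G$-weights gives $B_i|_{t=0}=0$, $D_i|_{t=0}=D_i|_{Z=0}$, and every non-zero entry of $C_i$ lies in $Z$. These three facts identify the pruned differentials $\tilde d_i$ precisely with the lower-right blocks $D_i|_{t=0}$ of the known-exact complex $L(X)^h|_{t=0}$. Exactness of $\tilde L$ over $R/(Z)$ then follows by a direct diagram chase: given $w \in \ker D_i|_{Z=0}$, lift to $(0,w) \in \ker [d_i]^h|_{t=0}$ over $R$, write it as $[d_{i+1}]^h|_{t=0}(v)$ by Proposition~\ref{prop:res_initial}, and project to the second summand; the discrepancy lies in the image of $C_{i+1}$, which vanishes modulo $Z$. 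This avoids Buchsbaum--Eisenbud entirely. Your proposal is missing exactly this block identification, without which there is no bridge from $L(X)^h|_{t=0}$ to any statement about $\tilde L$.
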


\begin{proof}
    The argument is exactly the same as that of Boocher in the proof of \cite[Theorem~4.1]{Boocher2012Paper} and follows from a careful study of the $w_G$-grading of the complex $L(X)^h$ together with Proposition~\ref{prop:res_initial}. We repeat the argument for the sake of completeness.
    We first observe that all principal minors have $w_G$-weight $2(n-1)$ and the same is true for $\det(X_{[n] \setminus k, [n] \setminus l})$, $k < l$, whenever there is a path in $G$ between $k$ and $l$. All the other $(n-1)$-minors of $X$ have weight $2n - 3$. The minors of the smaller $w_G$-weight $2n-3$ are precisely those which vanish identically after substituting zero for all variables in $Z$. All non-zero entries of the matrices $[d_2]$ and $[d_3]$ are simply variables up to sign and hence have $w_G$-weight at most $2$. Therefore, the minimal occurring $w_G$-degree in the $w_G$-graded free $R[t]$-module of $L(X)^h$ at the $i$-th place is $-2(n-1)-2(i-1)$ for all $i \geq 1$. Therefore, with respect to the $w_G$-grading, $L(X)^h$ has the following shape:
    \begin{align*}
        {\mbox{$\begin{array}{c} \bigoplus_{c_j<2n+2} R[t](-c_j) \\ \bigoplus \\ \bigoplus R[t](-2n-2) \end{array}$}} \overset{[d_3]^h}{\longrightarrow} {\mbox{$\begin{array}{c} \bigoplus_{b_j<2n} R[t](-b_j) \\ \bigoplus \\ \bigoplus R[t](-2n) \end{array}$}} \overset{[d_2]^h}{\longrightarrow} {\mbox{$\begin{array}{c} \bigoplus_{a_j<2(n-1)} R[t](-a_j) \\ \bigoplus \\ \bigoplus R[t](-2(n-1)) \end{array}$}} \overset{[d_1]^h}{\longrightarrow} R[t].
    \end{align*}
    We write
    \begin{equation*}
        [d_i]^h = \begin{pmatrix}
            A_i
  & \rvline &  B_i\\
\hline
  C_i & \rvline & D_i
        \end{pmatrix}
    \end{equation*}
    for $i = 2, 3$ and $[d_1]^h = \begin{pmatrix} C_1 & \rvline & D_1 \end{pmatrix}$, each time according to the direct sum decomposition. Just as Boocher, we can deduce three things from the grading alone:
    \begin{itemize}
        \item For $i = 2,3$, every non-zero entry of $B_i$ has $w_G$-weight at least $3$ and is therefore divisible by $t$. In particular, $B_i|_{t=0} = 0$.
        \item For $i = 2,3$, every non-zero entry of $D_i$ has $w_G$-weight $2$ and is therefore either of the form $\pm x_{ij}$ for $ij \in G$ or $i = j$ or of the form $\pm t x_{ij}$ for $i < j$ and $ij \not\in G$. In particular, $D_i|_{t=0} = D_i|_{Z=0}$. The latter is true also for $i = 1$.
        \item For $i = 2,3$, all non-zero entries of $C_i$ necessarily have $w_G$-weight $1$ and are hence of the form $\pm x_{ij}$ for $i<j$ and $ij \not\in G$. Therefore, $C_i|_{Z=0} = 0$ for all $i = 1,2,3$.
    \end{itemize}
    After setting $t=0$ in $L(X)^h$, we obtain the maps
    \begin{align*}
    [d_1]^h|_{t=0} &= \begin{pmatrix} C_1 & \rvline & D_1|_{t=0} \end{pmatrix}, \\
    [d_i]^h|_{t=0} &= \begin{pmatrix}
        A_i|_{t=0} & \rvline &  0\\
        \hline
        C_i & \rvline & D_i|_{t=0}
        \end{pmatrix} \quad \text{for } i=2,3,
    \end{align*}
    giving an exact sequence by Proposition~\ref{prop:res_initial}. Now, tensoring with $R/(Z)$ does not change $D_i|_{t=0}$ for $i = 2,3$ while $C_i$ becomes the zero matrix for all $i = 1,2,3$. The pruning procedure will hence first erase all columns of $C_1$, so that $[d_1]^h|_{t=0}$ becomes simply $D_1|_{t=0}$, whose image is $I_{n-1}(X_G)$. The corresponding rows of $[d_2]^h|_{t=0}$, which are precisely those of $A_2$, will be erased as well. Proceeding in the same way, the resulting pruned complex $P_G$ eventually only consists of the three maps $D_1, D_2, D_3$ with all variables from $Z$ replaced by zeros. The complex $P_G$ is then still exact: Denote by $\pi_i\colon (L(X)^h|_{t=0})^i \to P_G^i$ the projection onto the second big direct summand. Then $\pi_i([d_{i+1}]^h|_{t=0}(v)) - D_{i+1}|_{Z=0}(\pi_i(v))$ lies in the image of $C_{i+1}$ whose non-zero entries are variables in $Z$ up to sign. If, over $R/(Z)$, the element $w$ is in the kernel of $D_i$, we consider the obvious lift of $w$ to $(0,w)$ in the kernel of $[d_i]^h|_{t=0}$, now over $R$. By exactness of $L(X)^h|_{t=0}$, there exists $v$ such that $(0,w) = [d_{i+1}]^h|_{t=0}(v)$ and hence
    \begin{equation*}
        w = \pi_i((0,w)) = \pi_i([d_{i+1}]^h|_{t=0}(v)) = D_{i+1}|_{Z=0}(\pi_i(v)) \ \mathrm{mod} \ Z,
    \end{equation*}
    so that over $R/(Z)$ the element $w$ lies in the image of $D_{i+1}|_{Z = 0}$. This shows that $P_G$ is exact over $R/(Z)$ and hence also over $R$ because $R$ is a free module over $R/(Z)$.
\end{proof}

\begin{cor}
    The graded Betti numbers of $R/I_{n-1}(X_G)$ are those stated in Theorem~\ref{thm:main}.
\end{cor}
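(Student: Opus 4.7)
The plan is to apply Theorem~\ref{thm:pruning_works} and count, at each homological degree, how many basis vectors of Józefiak's complex $L(X)$ survive the pruning. Since $L(X)$ has only four non-zero free modules (in homological degrees $0, 1, 2, 3$ with internal degrees $0, n-1, n, n+1$), and since pruning neither changes internal degrees nor introduces new basis vectors, the vanishing of $\beta_{i,j}$ for $(i,j) \notin \{(0,0), (1,n-1), (2,n), (3,n+1)\}$ is automatic. Hence only the three non-trivial counts need to be determined.

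For $\beta_{1, n-1}$: Proposition~\ref{prop:pathDet} expresses the $(k,l)$-cofactor of $X_G$ (for $k < l$) as a signed sum indexed by paths in $G$ between $k$ and $l$, so it vanishes upon setting $Z = 0$ precisely when no such path exists, i.e., when $k$ and $l$ lie in distinct connected components; the $n$ principal cofactors never vanish. Hence exactly $D_G$ columns of $[d_1]$ are erased in step $1$ of the pruning, yielding $\beta_{1, n-1}(R/I_{n-1}(X_G)) = \binom{n+1}{2} - D_G$.

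For $\beta_{2, n}$, I would use the block description of $[d_2]$ recalled in Section~\ref{section_minimal_free_res}, together with the fact that step $1$ has already deleted the $D_G$ rows indexed by the pairs $\{k,l\}$ in different components. Each column $E_{ii} - E_{11}$ retains the diagonal-variable entries $-x_{11}$ and $x_{ii}$ in the surviving rows $E_{11}, E_{ii}$, so these $n-1$ columns always survive. The column $E_{ij}$ ($i \neq j$) carries the diagonal-variable entry $x_{ii}$ in the row indexed by the pair $\{i,j\}$; hence if $i$ and $j$ are in the same component this row is not erased and the column survives. Conversely, when $i$ and $j$ lie in different components, every other entry $x_{ki}$ in row $E_{\{k,j\}}$ vanishes by a componentwise dichotomy: either $k$ is not in the component of $j$ and the row was already erased, or $k$ is in the component of $j$ (hence not in the component of $i$), forcing $ik \notin G$ and $x_{ki} = 0$. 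Discarding a further $2 D_G$ columns gives $\beta_{2,n} = n^2 - 1 - 2 D_G$.

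For $\beta_{3, n+1}$, the rows of $[d_3]$ erased in step $2$ are precisely those indexed by $E_{ij}$ and $E_{ji}$ with $i, j$ in different components. Using the formula for $d_3(E_{ij}-E_{ji})$ from Section~\ref{section_minimal_free_res}, the column $E_{ij} - E_{ji}$ ($i < j$) contains the diagonal-variable entries $\pm x_{ii}, \pm x_{jj}$ in rows $E_{ij}, E_{ji}$, and the same componentwise dichotomy as before shows that this column vanishes after the final pruning step if and only if $i, j$ lie in different components. This discards exactly $D_G$ columns, yielding $\beta_{3, n+1} = \binom{n}{2} - D_G$. The main obstacle is the componentwise bookkeeping in steps $2$ and $3$: one must verify that once the ``diagonal witness'' row of a column has been erased, every remaining off-diagonal-variable entry is simultaneously killed, either by erasure of its row or by membership of its variable in $Z$. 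The decisive combinatorial fact is that for $i, j$ in distinct components of $G$, any $k$ in the component of $j$ automatically satisfies $ik \notin G$.
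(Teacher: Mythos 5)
Your proposal is correct and follows essentially the same route as the paper: invoke Theorem~\ref{thm:pruning_works}, use Proposition~\ref{prop:pathDet} to identify the $D_G$ vanishing cofactors, and then track which columns of $[d_2]$ and $[d_3]$ die after pruning via the ``diagonal witness row'' plus the dichotomy that for $i,j$ in different components every off-diagonal entry either lies in $Z$ or sits in an already-erased row. You merely spell out the $[d_3]$ case that the paper dismisses as ``similar.''
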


\begin{proof}
    By Theorem~\ref{thm:pruning_works}, it is enough to understand which columns of the matrices $[d_i]|_{Z=0}$ are identically zero. First, the entries of the row vector $[d_1]|_{Z=0}$ are $\pm \det((X_G)_{[n] \setminus k, [n] \setminus l})$ for $k \leq l$ and the latter vanishes if and only if $k \neq l$ and there is no path between $k$ and $l$ in $G$. Next, we consider our explicit description of the matrix $[d_2]$. We claim that, after pruning, the column of $[d_2]|_{Z=0}$ corresponding to the basis element $E_{ij}$, $i \neq j$, is zero precisely if there is no path in $G$ between $i$ and $j$. The entry in the row corresponding to $E_{ij}$, if $i < j$, resp. to $E_{ji}$, if $i > j$, is the diagonal variable $x_{ii}$. Thus, for the column to vanish this row must have been erased in the pruning process, which is the case if and only if the minor corresponding to $ij$ vanishes identically after substituting zero for all variables in $Z$. This, as we saw, is equivalent to $i$ and $j$ lying in different connected components of $G$. Conversely, if this is the case, then the column of $[d_2]|_{Z=0}$ indeed vanishes after pruning because the remaining variables appearing in this column are $x_{ki}$ for $k \neq i$ which appears only in the row corresponding to $E_{kj}$ if $k \leq j$ resp. $E_{jk}$ if $k>j$. But either $x_{ki} \in Z$ or $ki \in G$, and in the last case $k$ and $j$ necessarily lie in different connected components of $G$, so the row corresponding to $E_{kj}$ resp. $E_{jk}$ must have been erased in the pruning process. The argument for $[d_3]$ is similar.
\end{proof}

\section{A first non-trivial characteristic number for smooth sparse quadrics}

Let $K = \overline{K}$ and $\chr(K) \neq 2$ in this section. The set of quadric hypersurfaces in $\PP^{n-1}_K$ is identified with the set of non-zero symmetric $n \times n$ matrices over $K$ up to scaling, i.e., with $\PP(\Sym^2(K^n))$. A $G$-sparse quadric is one where all off-diagonal entries of its associated symmetric matrix corresponding to the non-edges of $G$ are zero. Geometrically, this is a coordinate linear subspace of $\PP(\Sym^2(K^n))$, hence is itself a projective space and clearly agrees with $\Proj(R_G) = \PP^{N_G - 1}$, where $R_G \coloneqq R/(Z)$ and $N_G \coloneqq \dim(R_G) = |E_G| + n$, $E_G$ being the edge set of $G$.

An immediate consequence of Theorem~\ref{thm:main} is that we can compute the degree of the vanishing subscheme $V(I_{n-1}(X_G)) \subseteq \PP^{N_G - 1}$.

\begin{prop}\label{prop:degree}
    If $G$ is a disconnected graph on $[n]$, the codimension of $V(I_{n-1}(X_G)) \subseteq \PP^{N_G - 1}$ is $2$ and its degree is precisely $D_G$. If $G$ is connected on $[n]$, then the codimension of $V(I_{n-1}(X_G)) \subseteq \PP^{N_G - 1}$ is $3$ and its degree is precisely $\binom{n+1}{3}$.
\end{prop}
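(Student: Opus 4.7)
The codimension assertions follow at once from Corollary~\ref{cor:height}: the generators of $I_{n-1}(X_G)$ all lie in the subring $R_G \subseteq R$, so $R/I_{n-1}(X_G) \cong (R_G/I_{n-1}(X_G)R_G)[Z]$ is a polynomial extension, whence the height of $I_{n-1}(X_G)$ is the same whether computed over $R$ or over $R_G$, and this height equals the codimension of $V(I_{n-1}(X_G))$ in $\mathbb{P}^{N_G-1}$.

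For the degree, the plan is a standard Hilbert-series calculation. Combining the Betti table in Theorem~\ref{thm:main} with the polynomial extension just noted,
\[
    \HF(R_G/I_{n-1}(X_G)R_G;t) \;=\; (1-t)^{|Z|}\,\HF(R/I_{n-1}(X_G);t) \;=\; \frac{P_G(t)}{(1-t)^{N_G}},
\]
where
\[
    P_G(t) \;=\; 1 - \left(\binom{n+1}{2}-D_G\right)t^{n-1} + (n^2-1-2D_G)\,t^n - \left(\binom{n}{2}-D_G\right)t^{n+1}.
\]
Writing $P_G(t) = (1-t)^c\,Q(t)$ with $c$ the codimension above and $Q(1)\neq 0$, the sought degree equals $Q(1) = (-1)^c\,P_G^{(c)}(1)/c!$.

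It thus remains only to compute a few derivatives of $P_G$ at $t = 1$. I would separate out the $D_G$-dependence, noting that the coefficient of $D_G$ in $P_G^{(k)}(1)$ is the second difference $(n-1)^{\underline{k}} - 2\,n^{\underline{k}} + (n+1)^{\underline{k}}$ of the falling factorial $m^{\underline{k}} = m(m-1)\cdots(m-k+1)$. A routine check (rooted in the identity $\binom{n+1}{2}+\binom{n}{2} = n^2$) shows that the $D_G$-free part of $P_G^{(k)}(1)$ vanishes for $k=0,1,2$ and equals $-n(n-1)(n+1)$ for $k=3$, while the $D_G$-coefficient vanishes for $k=0,1$ and equals $2$ at $k=2$. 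Assembling the pieces: in the connected case ($c=3$, $D_G=0$) this yields $Q(1)=\binom{n+1}{3}$; in the disconnected case ($c=2$, $D_G\geq 1$) it yields $Q(1)=D_G$.

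I do not anticipate any real obstacle beyond these elementary binomial identities; the only small subtlety is that in the disconnected case one must invoke $D_G\geq 1$ to ensure that $(1-t)^2$ is indeed the \emph{exact} power of $(1-t)$ dividing $P_G$ --- but this is automatic from disconnectedness and in fact provides a pleasant internal consistency check of Corollary~\ref{cor:height} from the Hilbert-series side.
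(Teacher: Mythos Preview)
Your argument is correct and follows essentially the same route as the paper: compute the Hilbert series numerator $P_G(t)$ from the Betti numbers of Theorem~\ref{thm:main} and read off codimension and degree from its order of vanishing and leading Taylor coefficient at $t=1$. The only cosmetic difference is that the paper factors $P_G(t)$ explicitly as $t^{n-1}(1-t)^2 D_G + (1-t)^3\sum_{k=0}^{n-2}\binom{k+2}{2}t^k$ and then evaluates, whereas you compute the derivatives $P_G^{(k)}(1)$ directly; the two are equivalent, and your consistency check via $D_G\geq 1$ in the disconnected case matches the paper's cancellation of exactly $(1-t)^2$.
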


\begin{proof}
    By Theorem~\ref{thm:main}, the minimal graded free resolution of $R_G/I_{n-1}(X_G)$ over $R_G$ (which looks the same as over $R$) has the form
    \begin{equation*}
        0 \rightarrow R_G(-(n+1))^{\binom{n}{2} - D_G} \rightarrow R_G(-n)^{n^2 - 1 - 2D_G} \rightarrow R_G(-(n-1))^{\binom{n+1}{2} - D_G} \rightarrow R_G.
    \end{equation*}
    For the Hilbert series of $R_G/I_{n-1}(X_G)$ we deduce
    \begin{align*}
        \mathrm{HS}(R_G/I_{n-1}(X_G)) &= \sum_{d = 0}^\infty \HF(R_G/I_{n-1}(X_G))(d) t^d \\
        &= \sum_{d = 0}^\infty \binom{d+N_G-1}{N_G-1} t^d \\
        &\quad - \left( \binom{n+1}{2} - D_G \right) \sum_{d = 0}^\infty \binom{d-(n-1)+N_G-1}{N_G-1} t^d \\
        &\quad + (n^2 - 1 - 2D_G) \sum_{d = 0}^\infty \binom{d-n+N_G-1}{N_G-1} t^d \\
        &\quad - \left( \binom{n}{2} - D_G\right) \sum_{d = 0}^\infty \binom{d-(n+1)+N_G-1}{N_G-1} t^d \\
        &= \frac{1 - \left( \binom{n+1}{2} - D_G \right)t^{n-1} + (n^2 - 1 - 2D_G)t^n - \left( \binom{n}{2} - D_G \right)t^{n+1}}{(1-t)^{N_G}} \\
        &= \frac{t^{n-1}(1-t)^2 D_G + (1-t)^3 \sum_{k=0}^{n-2} \binom{k+2}{2}t^k}{(1-t)^{N_G}}.
    \end{align*}
    Observing that $\sum_{k=0}^{n-2} \binom{k+2}{2} = \binom{n+1}{3}$, the last computation gives all claims after canceling $(1-t)^2$ for $D_G \neq 0$ or $(1-t)^3$ for $D_G = 0$.
\end{proof}

An immediate application of Proposition~\ref{prop:degree} is the following geometric result.

\begin{cor}
    Let $n \geq 3$. For any graph $G$ on $[n]$, the number of smooth $G$-sparse quadrics in $\PP^{n-1}$ tangent to $2$ general hyperplanes and passing through $N_G - 3$ general points is
    \begin{equation*}
        (n-1)^2 - D_G.
    \end{equation*}
    If $G$ is a connected graph on $[n]$, then moreover the number of smooth $G$-sparse quadrics in $\PP^{n-1}$ tangent to $3$ general hyperplanes and passing through $N_G - 4$ general points is
    \begin{equation*}
        (n-1)^3 - \binom{n+1}{3} = \frac{(n-1)(n-2)(5n-3)}{6}.
    \end{equation*}
\end{cor}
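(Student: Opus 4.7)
The plan is to translate the enumerative problem into intersection theory on $\PP^{N_G - 1} = \PP(R_G)$, apply Bezout's theorem inside a generic linear subspace cut out by the incidence conditions, and then correct for the excess coming from $V(I_{n-1}(X_G))$.

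First I would observe that ``$Q$ passes through a fixed point $p$'' is a linear condition on the entries of the symmetric matrix representing $Q$, so imposing $N_G - 3$ (respectively $N_G - 4$) general point conditions cuts $\PP^{N_G - 1}$ down to a generic linear subspace $L$ of dimension $2$ (respectively $3$). Next, for a fixed hyperplane $H = V(h^T x)$, a smooth quadric $V(x^T W x)$ is tangent to $H$ iff $h^T W^{-1} h = 0$, equivalently (clearing denominators) $h^T \operatorname{adj}(W) h = 0$. This extends to define a closed hypersurface $T_H \subset \PP^{N_G - 1}$ whose defining equation $h^T \operatorname{adj}(X_G) h$ is a quadratic expression in $h$ whose coefficients are the $(n-1)$-minors of $X_G$; hence $\deg T_H = n - 1$. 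Crucially, $V(I_{n-1}(X_G)) \subset T_H$ for every $H$, since $\operatorname{adj}(X_G)$ vanishes identically on the rank $\leq n-2$ locus.

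By Bezout's theorem applied inside $L$, the intersection $T_{H_1} \cap \cdots \cap T_{H_k} \cap L$ has degree $(n-1)^k$, with $k = 2$ in the disconnected case and $k = 3$ in the connected case. By Proposition~\ref{prop:degree}, the intersection $V(I_{n-1}(X_G)) \cap L$ contributes $D_G$ (respectively $\binom{n+1}{3}$) points to this Bezout count, and these do not correspond to smooth quadrics. Subtracting leaves the residual intersection of degree $(n-1)^k - \deg(V(I_{n-1}(X_G)) \cap L)$, which yields the claimed formulas after the routine algebraic simplification $(n-1)^3 - \binom{n+1}{3} = \tfrac{(n-1)(n-2)(5n-3)}{6}$.

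The main obstacle will be the multiplicity and transversality analysis that justifies the clean subtraction. Specifically, I would need to verify: (i) for generic $h$, the hypersurface $T_H$ contains $V(I_{n-1}(X_G))$ with multiplicity exactly one, which follows from the fact that $h^T \operatorname{adj}(X_G) h$ is, for generic $h$, a general $K$-linear combination of the generators of $I_{n-1}(X_G)$ and hence lies outside $I_{n-1}(X_G)^2$ at each generic point of $V(I_{n-1}(X_G))$; (ii) for generic $h_1, \ldots, h_k$, the $T_{H_i}$ meet transversely along $V(I_{n-1}(X_G))$ away from a proper subset, so that the points of $V(I_{n-1}(X_G)) \cap L$ enter the Bezout count with multiplicity one each; and (iii) no rank-$(n-1)$ singular quadric appears in the residual intersection, because any such quadric tangent to all of $H_1, \ldots, H_k$ must have its unique singular vertex in the codimension-$k$ linear subspace $H_1 \cap \cdots \cap H_k \subset \PP^{n-1}$, a non-generic condition that can be avoided in tandem with the point conditions. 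Should transversality prove delicate at any step, a fallback would be to compute the excess contribution directly via Segre classes of $V(I_{n-1}(X_G))$ in $\PP^{N_G - 1}$, whose total degree is accessible through the resolution provided by Theorem~\ref{thm:main}.
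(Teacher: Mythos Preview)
The paper states this result without proof, as an immediate corollary of Proposition~\ref{prop:degree}; your plan is exactly the standard argument that underlies this: tangency to $H$ is a degree-$(n-1)$ hypersurface containing $V(I_{n-1}(X_G))$, point conditions cut down to a generic $L\cong\PP^k$, and the excess contribution is the degree computed in Proposition~\ref{prop:degree}.

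One small correction to your step~(i): the polynomial $h^T\operatorname{adj}(X_G)h=\sum_{i,j}h_ih_jY_{ij}$ is \emph{not} a general $K$-linear combination of the cofactors $Y_{ij}$, since the coefficient matrix $(h_ih_j)$ is constrained to have rank~$1$. The conclusion you want is nevertheless correct: since rank-$1$ symmetric matrices linearly span $\Sym^2(K^n)$, the differentials $d\bigl(h^T\operatorname{adj}(X_G)h\bigr)\big|_p$, as $h$ ranges over $K^n$, still span the full conormal space at any smooth point $p$ of $V(I_{n-1}(X_G))$. This is precisely what is needed for both (i) and (ii): if these differentials were confined to a proper subspace of $N^*_p$, then some nonzero linear functional would annihilate every $dY_{ij}|_p$, contradicting that the $dY_{ij}|_p$ span $N^*_p$. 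Hence for $k$ generic hyperplanes (with $k$ equal to the codimension of $V(I_{n-1}(X_G))$) the corresponding differentials are independent at a generic point of each top-dimensional component, and the excess points enter with multiplicity one.
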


\section{Outlook}

\subsection{Primality} It would be desirable to have a combinatorial characterization for when $I_{n-1}(X_G)$ is prime, and this same question can of course be asked for the ideals of minors of arbitrary size $I_k(X_G)$, $1 \leq k \leq n$. It is clear that $I_1(X_G)$ is always a prime ideal but even for $k = 2$ and $k = n$ the answer is not entirely trivial. A necessary condition for $I_k(X_G)$ to be prime where $2 \leq k \leq n$ is that $G$ is $(n-k+1)$-connected, i.e., for any subset $M \subseteq [n]$ of cardinality $|M| = k$, the induced subgraph of $G$ on $M$ is connected. Indeed, if $G|_M$ is disconnected, then the principal minor $\det((X_G)_{M, M})$ factors as a product of two lower-order principal minors because after some permutation of $M$ the matrix $(X_G)_{M, M}$ is block-diagonal. But none of the two factors can be contained in $I_k(X_G)$ for degree reasons. This observation is also present in \cite[Lemma~7.10]{Conca2019Lovasz}. The following example is due to Aldo Conca. It shows that this necessary combinatorial condition is \emph{not} sufficient in general.

\begin{ex}\label{ex:Aldo}
    Let $n = 8$, $k = 5$ and $\chr(K) = 0$. Let $H$ be the complement graph of the complete bipartite graph $K_{4,4}$ with partition $[8] = \{1,2,3,4\} \sqcup \{5,6,7,8\}$. Adding to $H$ the four additional edges $15$, $26$, $37$ and $48$, we obtain a graph $G$. The induced subgraph of $G$ on any set of $5$ vertices is connected, so $G$ is $4$-connected. The sparse generic symmetric matrices for $G$ and $H$ are:
    \begin{align*}
        X_G &=
        {\small
        \begin{pmatrix}
            x_{11} & x_{12} & x_{13} & x_{14} & x_{15} & 0 & 0 & 0 \\
            x_{12} & x_{22} & x_{23} & x_{24} & 0 & x_{26} & 0 & 0 \\
            x_{13} & x_{23} & x_{33} & x_{34} & 0 & 0 & x_{37} & 0 \\
            x_{14} & x_{24} & x_{34} & x_{44} & 0 & 0 & 0 & x_{48} \\
            x_{15} & 0 & 0 & 0 & x_{55} & x_{56} & x_{57} & x_{58} \\
            0 & x_{26} & 0 & 0 & x_{56} & x_{66} & x_{67} & x_{68} \\
            0 & 0 & x_{37} & 0 & x_{57} & x_{67} & x_{77} & x_{78} \\
            0 & 0 & 0 & x_{48} & x_{58} & x_{68} & x_{78} & x_{88}
        \end{pmatrix},
        } \\
        X_H &= 
        {\small
        \begin{pmatrix}
            x_{11} & x_{12} & x_{13} & x_{14} & 0 & 0 & 0 & 0 \\
            x_{12} & x_{22} & x_{23} & x_{24} & 0 & 0 & 0 & 0 \\
            x_{13} & x_{23} & x_{33} & x_{34} & 0 & 0 & 0 & 0 \\
            x_{14} & x_{24} & x_{34} & x_{44} & 0 & 0 & 0 & 0 \\
            0 & 0 & 0 & 0 & x_{55} & x_{56} & x_{57} & x_{58} \\
            0 & 0 & 0 & 0 & x_{56} & x_{66} & x_{67} & x_{68} \\
            0 & 0 & 0 & 0 & x_{57} & x_{67} & x_{77} & x_{78} \\
            0 & 0 & 0 & 0 & x_{58} & x_{68} & x_{78} & x_{88}
        \end{pmatrix}.
        }
    \end{align*}
    We claim that $I_5(X_G)$ is not prime. For this, let $J = (x_{15}, x_{26}, x_{37}, x_{48})$. Then, clearly $I_5(X_G) + J = I_5(X_H) + J$. Since $X_H$ is block-diagonal, every $5$-minor of $X_H$ factors as a product of either a variable in one block and the determinant of the other block or as the product of a $2$-minor of one block and a $3$-minor of the other block. Laplace expansion shows that the determinant of a square matrix is contained in the ideal of all submaximal minors. In particular, $I_5(X_H)$ is contained in $I_1 + I_2$, where $I_1$, $I_2$ are the ideals of all $3$-minors of the two blocks, respectively. It is well-known that $I_1$ and $I_2$ are of height $3$. In particular, $I_1 + I_2 + J$ is a prime ideal of height $3+3+4 = 10$ since the three ideals are geometrically prime and involve disjoint sets of variables. We obtain $I_5(X_G) \subseteq I_5(X_G) + J = I_5(X_H) + J \subseteq I_1 + I_2 + J$. On the other hand, a computation in \texttt{Macaulay2} for $K = \Q$ gives $\height I_5(X_G) = 10$ over $\Q$ and hence over any field $K$ of $\chr(K) = 0$. So if $I_5(X_G)$ was prime, necessarily $I_5(X) = I_1 + I_2 + J$, which is impossible for degree reasons.
\end{ex}

Nonetheless, for some values of $k$ the necessary combinatorial condition is actually sufficient as the next result states.

\begin{prop}
    For $k = n$, the principal ideal $I_n(X_G) = (\det(X_G))$ is prime if and only if $G$ is connected. In case $\chr(K) = 0$ and $k = 2,3$, again $I_k(X_G)$ is prime if and only if $G$ is $(n-k+1)$-connected.
\end{prop}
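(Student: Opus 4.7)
\emph{Case $k = n$.} Necessity is immediate: if $G$ is disconnected with connected components $C_1, \ldots, C_r$, then after relabeling $[n]$ the matrix $X_G$ becomes block-diagonal and $\det(X_G) = \prod_{s=1}^{r} \det(X_{G|_{C_s}})$ is a nontrivial factorization in the UFD $R$. For sufficiency, assume $G$ is connected. The strategy is the Jacobian criterion: the partial derivatives of $\det(X_G)$ are, up to sign and a factor $2$, the $(n-1)$-cofactors of $X_G$, so they generate $I_{n-1}(X_G)$, which has height $3$ by Corollary~\ref{cor:height}. Hence the singular locus of $V(\det(X_G))$ has codimension at least $3$. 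Writing $\det(X_G) = \prod_i f_i^{e_i}$ into distinct irreducibles, two distinct factors $f_i, f_j$ would place $V(f_i) \cap V(f_j)$ of codimension $2$ in the singular locus, while a factor of multiplicity $\geq 2$ would place an irreducible component of codimension $1$ there; both contradict the bound on the singular locus. Thus $\det(X_G)$ is irreducible.

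\emph{Case $k = 2$, $\chr K = 0$.} For necessity, any non-edge $ij \notin G$ gives $x_{ii} x_{jj} \in I_2(X_G)$ while neither factor lies in $I_2(X_G)$ by degree, so the ideal is not prime. Conversely, if $G = K_n$ then $I_2(X_G) = I_2(X)$ is the classical defining ideal of the second Veronese variety $\nu_2(\PP^{n-1}) \subseteq \PP(\Sym^2(K^n))$, well-known to be prime (e.g.\ by the parametrization $v \mapsto vv^T$).

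\emph{Case $k = 3$, $\chr K = 0$.} Necessity is again standard. For sufficiency, the hypothesis translates to the complement $\overline G$ being a (possibly empty) matching $\{i_\ell j_\ell : \ell = 1, \ldots, m\}$ of pairwise disjoint edges. The plan is to exploit the classical parametrization of rank-$\leq 2$ symmetric matrices. Consider the $K$-algebra map
\begin{equation*}
    \psi \colon R_G \longrightarrow K[u_1, v_1, \ldots, u_n, v_n]/J, \qquad x_{ij} \mapsto u_i u_j + v_i v_j,
\end{equation*}
where $R_G = R/(Z)$ and $J = (u_{i_\ell} u_{j_\ell} + v_{i_\ell} v_{j_\ell} : \ell = 1, \ldots, m)$. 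Since every $3$-minor of a rank-$\leq 2$ symmetric matrix vanishes, the ideal $\tilde I \subseteq R_G$ generated by the $3$-minors of $X_G$ satisfies $\tilde I \subseteq \ker \psi$. The codomain is a domain: the generators of $J$ are irreducible non-degenerate quadrics in pairwise disjoint sets of four variables, forming a regular sequence, so $K[u,v]/J$ (after base change to $\overline K$ if necessary, then descended back by faithful flatness) is a tensor product over $K$ of $m$ integral hypersurface rings with a polynomial ring, hence a domain. Therefore $\ker \psi$ is prime, and since $R \cong R_G[Z]$, primality of $I_3(X_G)$ in $R$ is equivalent to primality of $\tilde I$ in $R_G$, which would follow from $\tilde I = \ker \psi$.

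The main obstacle is precisely this equality. A dimension count matches heights: the image of $\phi \colon (u, v) \mapsto uu^T + vv^T$ restricted to $V(J)$ has dimension $\dim V(J) - 1 = 2n - m - 1$ (the generic fiber is $O(2)$, which acts by $(u,v) \mapsto (u,v)Q$ preserving both $V(J)$ and the image), so $\height \ker\psi = \binom{n-1}{2}$ in $R_G$. It then suffices to show $\tilde I$ is radical, for then $\tilde I = I(V(\tilde I)) = I(V(\ker\psi)) = \ker\psi$. Radicality of $I_3(X_G)$ should follow from a Gröbner degeneration to a square-free monomial ideal in the spirit of Corollary~\ref{cor:generic_minors_groebner}; constructing such a degeneration (and independently pinning down the height) is the principal technical challenge of the proof.
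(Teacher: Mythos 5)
Your argument splits into three cases, and they are not all in the same state. For $k=n$ you take a genuinely different route from the paper (which runs an elementary combinatorial induction on a putative factorization $f=gh$, propagating along edges of the connected graph), namely the Jacobian criterion. The idea works, but the key step is misstated: the partial derivatives of $\det(X_G)$ are the cofactors $Y_{ii}$ and $2Y_{ij}$ \emph{only for} $ij\in G$; the cofactors at non-edge positions are not partial derivatives with respect to any surviving variable, so the Jacobian ideal is in general strictly smaller than $I_{n-1}(X_G)$, and it does not even have the same radical (an invertible symmetric matrix can have all diagonal cofactors equal to zero). What rescues the argument is intersecting with $V(\det(X_G))$: there the adjugate $Y(A)$ has rank at most one and is symmetric, hence of the form $\lambda ww^{T}$, and the vanishing of its diagonal entries $\lambda w_i^2$ forces $Y(A)=0$. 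Therefore $\mathrm{Sing}(V(\det(X_G)))=V(I_{n-1}(X_G))$ \emph{as sets}, and Corollary~\ref{cor:height} gives codimension $3$, which indeed rules out both coprime factors and repeated factors. With this repair (and passing to $\overline{K}$, which is harmless since the factors of a homogeneous polynomial are homogeneous) the case $k=n$ is complete, works in every characteristic, and is arguably slicker than the paper's proof. The case $k=2$ is fine.

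The case $k=3$ is where the genuine gap sits, and you name it yourself. Your reduction to $\widetilde I=\ker\psi$ is sound, and in fact the set-theoretic equality $V(\widetilde I)=V(\ker\psi)$ is easier than you suggest: over $\overline{K}$ in characteristic $0$ every symmetric matrix of rank at most $2$ is of the form $uu^{T}+vv^{T}$, and if it is $G$-sparse then $(u,v)\in V(J)$, so $\sqrt{\widetilde I}=\ker\psi$ without any height computation. But this only shows that $\ker\psi$ is the unique minimal prime of $I_3(X_G)$; the entire content of the statement is then the \emph{radicality} of $I_3(X_G)$, which you do not prove and which does not follow from anything in the paper (Section~\ref{section_groebner} treats only submaximal minors). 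The paper avoids this altogether: it observes that $(n-k+1)$-connectedness for $k=2,3$ forces the complement graph $G^c$ to be a forest of maximal degree at most $k-2$ (for $k=3$, a matching) and then quotes \cite[Theorem~7.8]{Conca2019Lovasz} verbatim. As it stands, your $k=3$ case is a plausible programme, not a proof.
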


\begin{proof}
    For $k = n$, it is enough to prove that $\det(X_G)$ is an irreducible polynomial in $R$. We adapt a combinatorial proof of the case $G = K_n$.\footnote{see the accepted answer of \href{https://math.stackexchange.com/questions/1893344/determinant-of-symmetric-matrix-is-an-irreducible-polynomial}{this} stackexchange post}
    First, we recall the following elementary fact: Let $S$ be any integral domain and $f \in S[x]$ a polynomial over $S$ in a single variable $x$. If $f = ax + b$ for $a,b \in S$, $a \neq 0$, and $f$ factors as $f = gh$ in $S[x]$, then precisely one of $g$ and $h$ is linear in $x$, i.e. of the form $cx+d$, $c \neq 0$, and the other one does not involve $x$ at all.
    
    Let now $f \coloneqq \det(X_G)$ where $G$ is connected. We have the following two facts:
    \begin{itemize}
        \item $f$ is linear in $x_{ii}$ for all $i = 1, \ldots, n$.
        \item For all $i < j$ such that $ij \in G$, we have that $f$ contains terms which are divisible by $x_{ij}$ but no term of $f$ is divisible by $x_{ii} x_{ij}$ or $x_{jj} x_{ij}$.
    \end{itemize}
    Now we assume that $f = gh$ in $R$. Without loss of generality, we assume that $g$ is linear in $x_{11}$, hence $h$ is independent of $x_{11}$ by the above. Therefore, $h$ is also independent of $x_{1i}$ for all $i > 1$ such that $1i \in G$ since otherwise $f$ would contain some term divisible by $x_{11} x_{1i}$. Hence, $g$ contains terms divisible by $x_{1i}$ for all $i > 1$ such that $1i \in G$ since $f$ contains such terms and $h$ does not. We conclude that all non-zero entries of $X_G$ in the first row appear only in $g$ but not in $h$. Next, for every $i$ such that $1$ is incident to $i$ in $G$, $h$ must be independent of $x_{ii}$ as well, otherwise $f$ would again contain terms of the form $x_{ii} x_{1i}$. Hence, $g$ must be linear in $x_{ii}$ for all $i$ incident to $1$ in $G$. With the same argument as before, $h$ is then also independent of all $x_{ij}$ such that $ij \in G$ while $g$ contains terms divisible by each of these variables. We conclude that all non-zero entries of $X_G$ in the $i$-th row only appear in $g$ but not in $h$ for every $i$ such that $1i$ is an edge of $G$. Continuing in this way, since $G$ is connected, we will eventually reach every row of $X_G$, implying that $h$ is constant, which concludes the proof.

    For $k = 2,3$ and $\chr(K) = 0$, the second claim is just a reformulation of \cite[Theorem~7.8]{Conca2019Lovasz} since their graph is precisely the complement graph $G^c$ of $G$. The important observation for $k = 2,3$ is that the complement of any $(n-k+1)$-connected graph $G$ is automatically a forest of maximal degree at most $k-2$. Indeed, for $k=2$ the graph $G$ is $(n-1)$-connected if and only if $G$ induces a connected graph on any pair of vertices. This means that $G = K_n$ is the complete graph. Hence, $G^c$ has no edges at all, so $G^c$ is clearly a forest of maximal degree $0$. For $k=3$ the graph $G$ is $(n-2)$-connected if and only if $G$ induces a connected graph on any triple of vertices, so either the triangle or the path. For $G^c$ this means that there is at most one edge between any three vertices. In particular, every vertex has degree at most $1$ in $G^c$, so $G^c$ is a forest of maximal degree $1$.
\end{proof}

Motivated by these results, we ask the following question.

\begin{question}
    Is the ideal $I_{n-1}(X_G)$ prime whenever $G$ is $2$-connected?
\end{question}

For $K = \Q$, we checked with \texttt{Macaulay2} that the answer is affirmative for $n \leq 6$.

\subsection{More sparsity} Graphical models in algebraic statistics motivate the study of ideals generated by only \emph{some} submaximal minors of a generic symmetric matrix. If the matrix in addition is allowed to be sparse, one arrives at the notion of a so-called \emph{double Markovian model} \cite{DoubleMarkovian}. The following result answers a combinatorial question raised in \cite[Remark~9]{DoubleMarkovian}, showing that the positivity assumption of \cite[Corollary~3]{DoubleMarkovian} can be relaxed.

\begin{prop}\label{prop:principally_regular}
	Let $A \in \Sym^2(K^n)$ be a symmetric $n \times n$ matrix which is principally regular, i.e., all principal minors of $A$ are non-zero. If for all $1 \leq i < j \leq n$ we have $A_{ij} \cdot (A^{-1})_{ij} = 0$, then $A$ is a diagonal matrix.
\end{prop}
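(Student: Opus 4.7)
The plan is to show that for any vertex $k \in [n]$, all off-diagonal entries of $A$ in row/column $k$ vanish; since the hypothesis is invariant under simultaneous permutation of rows and columns, it suffices to prove this for $k = 1$, after which the same argument applied to each remaining vertex yields $A$ diagonal. Since $A_{11} \neq 0$ by principal regularity, I partition
\[
A = \begin{pmatrix} A_{11} & v^T \\ v & B \end{pmatrix}, \qquad v \coloneqq (A_{12}, \ldots, A_{1n})^T,
\]
and form the Schur complement $C \coloneqq B - vv^T/A_{11}$. A direct block-inverse computation yields $(A^{-1})_{1j} = -(C^{-1}v)_{j-1}/A_{11}$ for $j \geq 2$, so the hypothesis $A_{1j}(A^{-1})_{1j} = 0$ translates into the componentwise vanishing $v_i \cdot (C^{-1}v)_i = 0$ for all $i \in [n-1]$.

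Setting $N \coloneqq \{i : v_i \neq 0\}$, this forces $(C^{-1}v)_i = 0$ for every $i \in N$; and since $v$ is supported on $N$, the identity reduces to $(C^{-1})_{N,N} \cdot v_N = 0$, where $(C^{-1})_{N,N}$ denotes the principal submatrix of $C^{-1}$ on the index set $N$. The crux is that $(C^{-1})_{N,N}$ is invertible: Jacobi's complementary minor formula gives $\det(C^{-1})_{N,N} = \det C_{[n-1]\setminus N}/\det C$, while the Schur complement determinant identity $\det C_T = \det A_{T \cup \{1\}}/A_{11}$, valid for every $T \subseteq [n-1]$, shows that both determinants are nonzero thanks to the principal regularity of $A$. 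Hence $v_N = 0$, which contradicts the definition of $N$ unless $N = \emptyset$, so $v = 0$ as needed.

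The main obstacle is simply assembling the two determinantal identities (Schur complement and Jacobi) together with the observation that every relevant principal minor of $C$ is a nonzero scalar multiple of a principal minor of $A$. Once these are in place, the proof is a clean piece of linear algebra; neither Theorem~\ref{thm:main} nor the path formula of Proposition~\ref{prop:pathDet} is required, and the blanket assumption $\chr(K) \neq 2$ of the surrounding section plays no role.
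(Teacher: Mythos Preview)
Your proof is correct and follows essentially the same route as the paper: isolate one row, translate the hypothesis into a homogeneous linear system on its nonzero off-diagonal entries, and kill those entries by showing the coefficient matrix is an invertible principal submatrix of a principally regular matrix. The only cosmetic difference is that you pass through the Schur complement $C = B - vv^T/A_{11}$ (so that $(A^{-1})_{1j}$ is proportional to $(C^{-1}v)_{j-1}$), whereas the paper expands cofactors directly and works with $B = A_{[n]\setminus 1}$ (so that $(A^{-1})_{1j}$ is proportional to $(B^{-1}v)_{j-1}$); since $C^{-1}v$ and $B^{-1}v$ are scalar multiples of each other, and both $B$ and $C$ inherit principal regularity from $A$, the two arguments are interchangeable.
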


\begin{proof}[Proof of Proposition~\ref{prop:principally_regular}]
	We write $A = (a_{ij})_{i,j \in [n]}$. First note that any principal submatrix of a principally regular symmetric matrix is also principally regular by definition. Moreover, an invertible symmetric matrix is principally regular if and only if so is its inverse. The latter follows from the formula
	\begin{equation*}
	\det((A^{-1})_{I,I}) = \frac{\det(A_{[n] \setminus I, [n] \setminus I})}{\det(A)}
	\end{equation*}
	for any $I \subseteq [n]$. If $A$ is not diagonal, then after a permutation of $[n]$ we may assume that there exists a non-zero off-diagonal entry in the first row. We can even assume that there is $k \geq 2$ such that $a_{1i} = 0$ for all $i \geq k+1$ and $a_{1i} \neq 0$ for all $2 \leq i \leq k$. Then
	\begin{equation*}
	A_{[n] \setminus 1, [n] \setminus i} = \begin{pmatrix}
	\begin{matrix}
	a_{12} \\ a_{13} \\ \vdots \\ a_{1k} \\ 0 \\ \vdots \\ 0
	\end{matrix} & \vline & A_{[n] \setminus 1, [n] \setminus 1i}
	\end{pmatrix}.
	\end{equation*}
	By hypothesis it now follows that $(A^{-1})_{1i} = 0$ for all $2 \leq i \leq k$ which translates into
	\begin{equation*}
	0 = (-1)^i \det(A_{[n] \setminus 1, [n] \setminus i}) = \sum_{j=2}^k (-1)^{i+j} a_{1j} \det(A_{[n] \setminus 1j, [n] \setminus 1i})
	\end{equation*}
	for all $2 \leq i \leq k$. Equivalently, in matrix form,
	\begin{equation}\label{eq:principally_regular}
	\begin{pmatrix}
	(-1)^{i+j} \det(A_{[n] \setminus 1j, [n] \setminus 1i})
	\end{pmatrix}_{i,j = 2, 3, \ldots, k} \cdot \begin{pmatrix}
	a_{12} \\ a_{13} \\ \vdots \\ a_{1k}
	\end{pmatrix} =
	\begin{pmatrix}
	0 \\ 0 \\ \vdots \\ 0
	\end{pmatrix}.
	\end{equation}
	We observe that
	\begin{equation*}
	(-1)^{i+j} \det(A_{[n] \setminus 1j, [n] \setminus 1i}) = \det(A_{[n] \setminus 1, [n] \setminus 1}) \cdot ((A_{[n] \setminus 1, [n] \setminus 1})^{-1})_{i-1,j-1}.
	\end{equation*}
	But $(A_{[n] \setminus 1, [n] \setminus 1})^{-1}$ is a principally regular symmetric matrix, hence the $(k-1) \times (k-1)$ matrix in \eqref{eq:principally_regular} is invertible, implying $a_{12} = a_{13} = \cdots = a_{1k} = 0$.
\end{proof}

\subsection*{Acknowledgments}
Many thanks go to Aldo Conca for providing Example~\ref{ex:Aldo} and pointing us in the right direction with respect to Proposition~\ref{prop:principally_regular}.
Moreover, \texttt{Macaulay2} has been of great help in gaining intuition by computing many examples.
The first named author was supported by Mitacs through the RISE Germany 2022 program of the German Academic Exchange Service (DAAD).
The last named author is supported by the Deutsche Forschungsgemeinschaft (DFG, German Research Foundation) -- 314838170, GRK~2297 MathCoRe.

\bibliographystyle{amsplain}
\bibliography{SparseSymDet.bib}
\end{document}